\numberwithin{equation}{section}
\numberwithin{figure}{section}
\theoremstyle{plain}
\newtheorem{thm}{\protect\theoremname}
  \theoremstyle{plain}
  \newtheorem{prop}{\protect\propositionname}
 \theoremstyle{definition}
  \newtheorem{example}{\protect\examplename}
\author{Sheng Yu and Enrique Campos-N\'{a}\~{n}ez\\ \normalsize Dept. of Engineering Management and System Engineering\\ \normalsize The George Washington University\\}
\date{Originally written in Spring, 2011}
  \providecommand{\examplename}{Example}
  \providecommand{\propositionname}{Proposition}
\providecommand{\theoremname}{Theorem}
\begin{document}

\title{Computing Supply Function Equilibria via Spline Approximations}
\maketitle
\begin{abstract}
The supply function equilibrium (SFE) is a model for competition in
markets where each firm offers a schedule of prices and quantities
to face demand uncertainty, and has been successfully applied to wholesale
electricity markets. However, characterizing the SFE is difficult,
both analytically and numerically. In this paper, we first present
a specialized algorithm for capacity constrained asymmetric duopoly
markets with affine costs. We show that solving the first order conditions
(a system of differential equations) using spline approximations is
equivalent to solving a least squares problem, which makes the algorithm
highly efficient. We also propose using splines as a way to improve
a recently introduced general algorithm, so that the equilibrium can
be found more easily and faster with less user intervention. We show
asymptotic convergence of the approximations to the true equilibria
for both algorithms, and illustrate their performance with numerical
examples.
\end{abstract}

\section{Introduction}

With the presence of demand uncertainty, firms may choose to compete
in supply functions -- a schedule of prices and quantities that correspond
to different realizations of the demand. Such concept was first introduced
by Klemperer and Meyer \cite{klemperer1989supply}, and they named
the non-cooperative Nash Equilibrium of this type of games the Supply
Function equilibrium (SFE). Soon, people found that the competition
in the deregularized wholesale electricity markets bear high resemblance
to this formulation, and Green and Newbery \cite{green1992competition}
first applied this model to the England and Wales market. Since then,
modeling behaviors of wholesale electricity markets has been an important
application of the SFE.

The SFE model has attracted tremendous attention from both industry
and academia. Despite its popularity, people found the SFE model difficult
due to the following issues: (1) The first order necessary conditions
of the SFE is a system of ordinary differential equations (ODE), shown
in Klemperer and Meyer \cite{klemperer1989supply}, but when people
try to solve this system of ODEs, they usually find that the solutions
are not increasing functions%
\footnote{In this paper, the terms ``increasing'' and ``non-decreasing''
are used interchangeably. Supply functions must be non-decreasing
(increasing) to be feasible, but not necessarily strictly increasing.%
}, which is a requirement for feasibility; (2) there can be an infinite
number of supply function equilibria, leading to an equilibrium selection
problem; (3) it is hard to incorporate capacity constraints and general
cost functions to the framework, and allowing supply functions to
have a free form makes the problem even more complicated, therefore
many studies are limited to symmetric firms and/or restraining the
solution space to functions of simple forms, such as linear%
\footnote{We use ``linear'' for first-degree polynomials, which are also called
affine functions in some literature.%
} and quadratic functions.

Despite all the difficulties, researchers have made substantial progress
both in theoretical and computational analysis of the SFE. Klemperer
and Meyer \cite{klemperer1989supply} provided foundational analysis
of the supply function equilibrium, and compares and contrasts the
SFE with the equilibria of Cournot and Bertrand games. It also showed
the existence of the SFE of symmetric oligopolies (assuming convex
costs, a concave demand, and no capacity constraints), and showed
that there could usually be infinite supply function equilibria, unless
the support of the distribution of the demand shock was unbounded.

Later, researchers found that capacity constraints could greatly reduce
the number of potential supply function equilibria, and sometimes
even make it unique. Holmberg \cite{holmberg2008unique} proved that
if we had symmetric producers, inelastic demand, a price cap and if
the capacity constraints bound with a positive probability, then we
had a unique symmetric equilibrium. With the same conditions, except
that the producers had identical marginal costs but asymmetric capacities,
Holmberg \cite{holmberg2007supply} showed the equilibrium was unique
and piecewise symmetric.

Perhaps the most important topic in the study of SFEs is how to find
them. Finding the SFE is difficult, and restrictions are usually needed.
Rudkevich et al.~\cite{rudkevich1998modeling} provided a closed-form
formula for cases where demand was inelastic and firms did not have
capacity constraints, and it further showed that the equilibrium price
had a high mark-up compared to the perfectly competitive price. Green
\cite{green1996increasing,green1999electricity} restricted the supplies
to linear functions, and applied the model to the England and Wales
market. Baldick et al.~\cite{baldick2004theory} showed how to find
linear and piecewise linear SFE when the demand and marginal costs
were linear.

A popular approach to finding the SFE is to work on the first order
conditions (a system of ODEs). Many of the studies following this
approach involved the use of numerical integration, but the major
difficulty was that the initial conditions were unknown, and without
the right initial conditions, the integrals so calculated would usually
not be increasing functions, i.e. they were not feasible supply functions.
With the assumption that capacity constraints of smaller firms bind
earlier, Holmberg \cite{holmberg2009numerical} provided a procedure
for solving the ODE system via numerical integration that searched
for feasible solutions by tuning the initial conditions, i.e., the
prices at which the capacity constraints were reached.

Baldick and Hogan \cite{baldick2001capacity} proposed an alternative
approach that used an iterative scheme for finding the SFE: At each
step, each firm updates its supply function by moving to the best
response to the other firms' previous offers with a discount factor.
This procedure was repeated and hopefully it would converge. However,
as the authors pointed out, the computational cost (finding the best
response) of this iterative scheme was huge, even when it converged.

To use the iterative scheme, one must first know how to find the optimal
response to a given set of supplies. Anderson and Philpott \cite{anderson2002using}
provided conditions for the existence of the optimal response, and
analyzed the bound of difference in profit when approximations of
the supply functions were needed. Anderson and Philpott \cite{anderson2002optimal}
and Anderson and Xu \cite{anderson2002necessary} expressed the expected
return of the firms as line integrals, proved the existence of the
optimal supply function, and gave necessary and sufficient conditions
for optimality. Rudkevich \cite{rudkevich2003supply} described an
algorithm for developing piecewise linear optimal responses by cutting
the $x$-$y$ plane into blocks.

Baldick and Hogan \cite{baldick2004polynomial} discussed using high
degree polynomials in the iterative scheme as a parametric form of
the supply functions. The authors pointed out that such approximation
was not stable. By ``stable'' they meant that given a small perturbation
in the equilibrium, the supply functions would still converge to the
same equilibrium if the firms followed the iterative scheme.

Anderson and Hu \cite{anderson2008finding} showed conditions for
the SFEs' continuity and differentiability, which served as theoretical
guides to algorithm development. In addition, they proposed a numerical
method for finding the SFE that allowed the firms to have heterogeneous
capacities and costs. Their method allowed the supply functions to
have free form, and approximates them with piecewise linear functions.
To find the equilibrium, the method searched for a feasible solution
by solving an auxiliary nonlinear program (NLP) that had the necessary
conditions as the constraints. This method has been successfully applied
in Sioshansi and Oren \cite{sioshansi2007good}, which showed some
large generators in the ERCOT electricity market in Texas bid approximately
in accordance with the SFE model.

All the above studies of supply function equilibria focused on continuous
supply functions, while in practice offers in most markets are step-functions.
Holmberg et al.~\cite{holmberg2008supply} showed convergence of
the discrete SFE to the well studied continuous one as the number
of steps increases.

In this paper we benefit from Anderson and Hu \cite{anderson2008finding},
and focus on numerical methods for finding the SFE. We allow the supply
functions to have free form, and we will exploit the capability of
splines to approximate the SFE accurately.

In the first part of this paper, we provide a specialized algorithm
for markets of asymmetric duopolies that have constant marginal costs.
In Section \ref{sec:SFE-without-cap}, we express the first order
conditions with splines, reduce the problem of solving the system
of ODEs to a least squares problem, and show that the solution space
of the ODE system and that of the least squares problem are the same
(in terms of approximation). Since we avoided using numerical integration,
we do not have the initial point selection problem. The solution of
the least squares problem has a very simple form, and we show in Section
\ref{sec:twoGenCap} that we can use a linear search to find the unique
equilibrium of the capacitated problem. In Section \ref{sec:general method},
we propose an improvement to the general method given by Anderson
and Hu \cite{anderson2008finding}. We will see that with the use
of splines, the number of decision variables and constraints can be
greatly reduced, thus in principle solving the optimization problem
should be easier and faster. Uniform convergence will be shown for
both methods. Examples that demonstrate the use and the properties
of these methods are provided in Section \ref{sec:NumericalExamples}.

\section{Solving the First Order Necessary Conditions\label{sec:SFE-without-cap}}

Our model considers a market with $m$ firms. Each firm $i$ has a
maximal capacity $Cap_{i}$. Let $C_{i}(q)$ be the cost of firm $i$
for producing an amount of $q$. Assume $C_{i}(q)$ is convex, non-decreasing
and differentiable for all $i$. Each firm knows the exact cost function
and capacity of its own, as well as those of all the other competitors.

The market demand is a function of the form $D(p,\varepsilon)=D(p)+\varepsilon$.
$D(p)$ is strictly decreasing, continuously differentiable and concave,
and it is known to all firms. The demand shock $\varepsilon$ is a
continuous random variable, and all the firms know that $\varepsilon$
has positive probability density on $[\varepsilon_{min},\varepsilon_{max}]$.
We will focus on the type of SFE that each point of the supply function
is the best response to a realization of $\varepsilon$, also termed
as ``strong SFE'' in Anderson and Hu \cite{anderson2008finding},
so the knowledge of the exact distribution of $\varepsilon$ is not
necessary for finding the equilibrium.

The supply function of firm $i$ is a non-decreasing function $s_{i}:[0,p_{max}]\rightarrow[0,Cap_{i}]$,
where $p_{max}=\sup\{p\ge0\mid D(p,\varepsilon_{max})\ge0\}$. If
there is a market specified price cap and if it is less than $\sup\{p\ge0\mid D(p,\varepsilon_{max})\ge0\}$,
let $p_{max}$ equal to the price cap.

As first pointed out in Klemperer and Meyer \cite{klemperer1989supply},
in an SFE, the supply functions $\{s_{j}\}_{j=1}^{m}$ must maximize
each firm's profit 
\begin{equation}
\max_{p}\; p\left[D(p)+\varepsilon-\sum_{i\ne j}s_{i}(p)\right]-C_{j}(D(p)+\varepsilon-\sum_{i\ne j}s_{i}(p)),\: j=1,\dots,m\label{eq:maxProfit}
\end{equation}
at all $p\in[0,p_{max}]$. If the supply functions $\{s_{j}\}_{j=1}^{m}$
are differentiable at $p$, and if $0<s_{j}(p)<Cap_{j}$ for all $j$,
then we have the first order conditions 
\[
\sum_{i\ne j}s_{i}'(p)-\frac{s_{j}(p)}{p-C_{j}'\left(s_{j}(p)\right)}=D'(p),\: j=1,\dots,m.
\]

Throughout Section \ref{sec:SFE-without-cap} and \ref{sec:twoGenCap},
we assume that the marginal costs are constant. Let the marginal cost
for firm $j$ be $c_{j}$. Then the first order conditions reduce
to 
\begin{equation}
\sum_{i\ne j}s_{i}'(p)-\frac{s_{j}(p)}{p-c_{j}}=D'(p),\: j=1,\dots,m.\label{eq:firstOrder}
\end{equation}

Anderson and Hu \cite{anderson2008finding} proves that in an equilibrium,
the supply functions are continuous for $p\notin\left\{ c_{1},\dots,c_{m}\right\} $.
Furthermore, it shows that in an equilibrium, each supply function
$s_{i}(p)$ is continuously differentiable at $p\notin\left\{ c_{1},\dots,c_{m}\right\} \cup\left\{ p_{Cap_{1}},\dots,p_{Cap_{m}}\right\} $,
where $p_{Cap_{i}}$ is the price where firm $i$ reaches its capacity,
i.e., $p_{Cap_{i}}=\inf\{p\mid s_{i}(p)=Cap_{i}\}$. Assume $\max(c_{1},\dots,c_{m})<\min(p_{Cap_{1}},\dots,p_{Cap_{m}})$.
Since the supply functions are increasing, if $0<s_{i}(p)<Cap_{i}$
is true for all $i$ at a price $p$, then we must have $\max(c_{1},\dots,c_{m})<p<\min(p_{Cap_{1}},\dots,p_{Cap_{m}})$,
which implies that $\{s_{i}\}_{i=1}^{m}$ are continuously differentiable
at $p$. Therefore, $\{s_{i}\}_{i=1}^{m}$ is a solution to the ODE
system (\ref{eq:firstOrder}) for all the prices $p$ such that $\max(c_{1},\dots,c_{m})<p<\min(p_{Cap_{1}},\dots,p_{Cap_{m}})$.
Since we do not know the value of $\min(p_{Cap_{1}},\dots,p_{Cap_{m}})$
yet, we will solve (\ref{eq:firstOrder}) numerically for $p\in\left(p_{min},p_{max}\right)$,
where $p_{min}=\max(c_{1},\dots,c_{m})$, and we will find $\min(p_{Cap_{1}},\dots,p_{Cap_{m}})$
in the next section.

Since $\{s_{i}(p)\}_{i=1}^{m}$ are continuously differentiable on
$(\max(c_{1},\dots,c_{m}),\min(p_{Cap_{1}},\dots,p_{Cap_{m}}))$,
it is a good idea to approximate them with splines. To achieve continuous
differentiability, the splines we use should be at least of order
3 (quadratic splines). Order 4 splines (cubic splines) are prefered
by most people, as they are the lowest-order splines that are smooth
to human eyes.

Splines have been very popular for their capability for approximation.
And beginning from the late 1960's, splines are being used by mathematicians
to develop numerical solutions to ordinary and partial differential
equations. We will fundamentally do the same in this section. To estimate
the spline coefficients, one can either use interpolation or use least
squares estimation. In this paper we use the latter one, and the reason
will be justified shortly.

We start by selecting knots for the spline approximation, and for
simplicity, we will let the knots for all the supply functions be
the same, as this is good enough according to our numerical experience.
Then, according to the type of splines we use, we will have basis
functions associated with the knots. Denote the bases with $B_{t}(x)$,
$t=1,\dots,K$, $K$ depending on the type and the order of the splines.
Let $B(x)=(B_{1}(x),\dots,B_{K}(x))^{T}$, a vector of basis functions.
Denote the spline approximation of $s_{i}(p)$ with 
\[
\hat{s}_{i}(p)=\sum_{t}b_{it}B_{t}(p)=B^{T}(p)\cdot\beta_{i},
\]
where $b_{it}$ are the coefficients to be determined and $\beta_{i}=(b_{i1},\dots,b_{iK})^{T}$
is the coefficient vector for $\hat{s}_{i}$.

We replace the supply functions in the first order conditions (\ref{eq:firstOrder})
with their spline approximations. The equations now become 
\begin{equation}
\sum_{i\ne j}\sum_{t}b_{it}B_{t}'(p)-\frac{\sum_{t}b_{jt}B_{t}(p)}{p-c_{j}}=D'(p),\: j=1,\dots,m.\label{eq:firstOrderSpline}
\end{equation}
Observe that with $p$ fixed, (\ref{eq:firstOrderSpline}) is linear
in $b_{it}$, $i=1,\dots,m$, $t=1,\dots,K$. Thus (\ref{eq:firstOrderSpline})
can also be written in matrix form: 
\begin{equation}
\mathfrak{B}_{\{j\}}^{T}(p)\cdot\beta=D'(p),\: j=1,\dots,m,\label{eq:firstOrderSplineMatrix}
\end{equation}
where 
\[
\mathfrak{B}_{\{j\}}(p)=(\underbrace{B'^{T}(p),\dots,B'^{T}(p)}_{j-1},-\frac{B^{T}(p)}{p-c_{j}},\underbrace{B'^{T}(p),\dots,B'^{T}(p)}_{m-j})^{T},
\]
is a vector of functions, and 
\[
\beta=(\beta_{1}^{T},\dots,\beta_{m}^{T})^{T}.
\]

The necessary conditions (\ref{eq:firstOrderSplineMatrix}) are linear
equations that the splines are expected to satisfy. Hence it is natural
to use the least squares method to estimate the coefficients, which
is part of the reason of our choice. At each price $p$, (\ref{eq:firstOrderSplineMatrix})
provides $m$ equations. We have $K\cdot m$ coefficients to estimate,
thus one may wish to choose at least $K$ prices from the range $\left(p_{min},p_{max}\right)$
to determine $b_{it}$, $i=1,\dots,m$, $t=1,\dots,K$.%
\footnote{In fact as we will see very soon, it is not enough to determine the
coefficients. But to reduce confusion, let us just proceed at this
point.%
} Let the selected prices be $p_{1},...,p_{N}\in\left(p_{min},p_{max}\right)$.
Let 
\[
\mathbb{B}=(\mathfrak{B}_{\{1\}}(p_{1}),\dots,\mathfrak{B}_{\{m\}}(p_{1}),\dots,\mathfrak{B}_{\{1\}}(p_{N}),\dots,\mathfrak{B}_{\{m\}}(p_{N}))^{T},
\]
and 
\[
d=(\underbrace{D'(p_{1}),\dots,D'(p_{1})}_{m},\dots,\underbrace{D'(p_{N}),\dots,D'(p_{N})}_{m})^{T}.
\]
We expect the spline approximations to satisfy the linear system $\mathbb{B}\beta=d$,
where a typical line of the system, say, $\mathfrak{B}_{\{j\}}^{T}(p_{k})\beta=D'(p_{k})$,
is a characterization of the relationship between $\hat{s}_{j}$ and
the derivatives of all the other supply functions at price $p_{k}$.
To estimate $\{b_{it}\}$, we solve the optimization problem 
\begin{equation}
\min_{\{b_{it}\}}\left\Vert \mathbb{B}\beta-d\right\Vert ^{2}.\label{eq:LSE}
\end{equation}

Before we solve this minimization problem, we would like to have a
look at the solution to the original ODE system analytically.

Consider a market with two firms 1 and 2. (\ref{eq:firstOrder}) is
now a set of two equations: 
\begin{align*}
s_{1}'(p)-\frac{s_{2}(p)}{p-c_{2}} & =D'(p),\\
s_{2}'(p)-\frac{s_{1}(p)}{p-c_{1}} & =D'(p),
\end{align*}
whose homogeneous problem 
\begin{align*}
s_{1}'(p)-\frac{s_{2}(p)}{p-c_{2}} & =0,\\
s_{2}'(p)-\frac{s_{1}(p)}{p-c_{1}} & =0,
\end{align*}
has solution 
\begin{align*}
s_{1}(p) & =t_{1}(p-c_{1})+\frac{t_{2}}{(c_{1}-c_{2})^{2}}\left(c_{2}-c_{1}+(p-c_{1})\log\left(\frac{p-c_{2}}{p-c_{1}}\right)\right),\\
s_{2}(p) & =t_{1}(p-c_{2})+\frac{t_{2}}{(c_{1}-c_{2})^{2}}\left(c_{2}-c_{1}+(p-c_{2})\log\left(\frac{p-c_{2}}{p-c_{1}}\right)\right),
\end{align*}
where $t_{1},t_{2}\in\mathbb{R}$, that is, a homogeneous solution
is a linear combination of two fundamental solutions. However, if
$t_{2}\ne0$, it is easy to verify that as $p\rightarrow\max(c_{1},c_{2})$
from above, either $\vert s_{1}\vert\rightarrow\infty$ or $\vert s_{2}\vert\rightarrow\infty$.
Therefore for the practical background of our problem, $t_{2}$ must
be 0, and consequently the homogeneous solution is just
\begin{align}
s_{1}(p) & =t(p-c_{1}),\nonumber \\
s_{2}(p) & =t(p-c_{2}).\label{eq:homogeneous solution}
\end{align}
Thus if $\{s_{1}^{0}(p),s_{2}^{0}(p)\}$ and $\{s_{1}^{1}(p),s_{2}^{1}(p)\}$
are two equilibria, we must have $s_{1}^{1}(p)=s_{1}^{0}(p)+t(p-c_{1})$,
$s_{2}^{1}(p)=s_{2}^{0}(p)+t(p-c_{2})$ for some $t$. On the other
hand, if $\{s_{1}(p),s_{2}(p)\}$ is a solution to the ODE system
(\ref{eq:firstOrder}), then $\{s_{1}(p)+t(p-c_{1}),s_{2}(p)+t(p-c_{2})\}$
is a solution, too, for any $t\in\mathbb{R}$. Thus (\ref{eq:firstOrder})
has infinite solutions, and we next show that our spline approximation
can indeed represent all these solutions in duopoly markets. This
result holds unless $\mathbb{B}$ has columns of zeros%
\footnote{This happens when an knot interval contains no $p_{k}$ if we use
B-splines. If we use natural cubic splines, it happens if neither
of the last two knot intervals contains any $p_{k}$.%
} or has fewer rows than columns.
\begin{thm}
\textup{\label{P: singularity}} For duopoly markets, $\mathbb{B}$
does not have full column rank. Furthermore, the rank of $\mathbb{B}$
is $2K-1$, unless it contains columns of zeros or has fewer rows
than columns. These results do not dependent on the type of splines
and the selection of knots.\end{thm}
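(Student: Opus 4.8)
The plan is to prove the two assertions in turn: first that $\mathbb{B}$ is always rank‑deficient, and then that its nullity is exactly one once the listed degeneracies are excluded. Throughout I write $E$ and $E'$ for the $N\times K$ matrices whose $k$‑th rows are $B^{T}(p_{k})$ and $B'^{T}(p_{k})$, and $\Lambda_{i}=\operatorname{diag}\bigl(1/(p_{k}-c_{i})\bigr)$, which is well defined since every $p_{k}>p_{min}=\max(c_{1},c_{2})$. Reordering the rows of $\mathbb{B}$ (which leaves the rank unchanged) so that all $j=1$ equations precede all $j=2$ equations puts it in the block form
\[
\mathbb{B}\;\sim\;\begin{pmatrix}-\Lambda_{1}E & E'\\ E' & -\Lambda_{2}E\end{pmatrix},
\]
so that $\beta=(\beta_{1}^{T},\beta_{2}^{T})^{T}$ lies in the kernel exactly when the spline pair $(\hat{s}_{1},\hat{s}_{2})=(B^{T}\beta_{1},B^{T}\beta_{2})$ satisfies the \emph{homogeneous} first order conditions at every selected price $p_{k}$.

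For the first claim I would exhibit one explicit kernel vector. Because the splines have order at least $2$, the basis reproduces linear polynomials, so there exist coefficient vectors $\gamma_{1},\gamma_{2}$ with $B^{T}(p)\gamma_{i}=p-c_{i}$ identically. The pair $(p-c_{1},\,p-c_{2})$ is precisely the homogeneous solution derived above, hence it satisfies the homogeneous conditions at \emph{all} prices, in particular at $p_{1},\dots,p_{N}$. Thus $(\gamma_{1}^{T},\gamma_{2}^{T})^{T}$ is a nonzero kernel element, $\mathbb{B}$ cannot have full column rank, and its rank is at most $2K-1$.

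For the second claim I would show the kernel is at most one‑dimensional. Let $\beta$ be any kernel vector with splines $u=\hat{s}_{1}$, $v=\hat{s}_{2}$, and form the two \emph{defect functions} $P(p)=v'(p)(p-c_{1})-u(p)$ and $Q(p)=u'(p)(p-c_{2})-v(p)$. Each is a piecewise polynomial of degree at most $r-1$ (with $r$ the spline order) that, by the kernel relations, vanishes at every $p_{k}$. The crux is to upgrade this pointwise vanishing to $P\equiv Q\equiv 0$: on each knot interval $P$ and $Q$ are ordinary polynomials of degree $\le r-1$, and once the evaluation points are numerous and well distributed enough — which is what excluding zero columns and ``fewer rows than columns'' is meant to ensure, together with the global smoothness the splines (hence the defects) inherit across knots — a counting of zeros forces every piece to vanish. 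Then $(u,v)$ solves the homogeneous ODE system identically; since the second fundamental solution carries a $\log$ term and is therefore not piecewise polynomial, the only spline‑representable solutions are the multiples of $(p-c_{1},p-c_{2})$. Hence the kernel is spanned by $(\gamma_{1}^{T},\gamma_{2}^{T})^{T}$, the nullity is one, and the rank is $2K-1$. Because this reasoning uses only linear reproduction, the degree bound on the defects, and the logarithmic obstruction, it is insensitive to the particular spline family and knot placement, yielding the stated independence.

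The main obstacle I anticipate is exactly this discrete‑to‑identical upgrade. Pointwise vanishing of a degree‑$(r-1)$ spline at a single point per knot interval does \emph{not} by itself kill it, so the argument must combine the per‑interval degree bound with the inter‑knot smoothness constraints and translate the hypotheses (``no zero columns'', ``at least as many rows as columns'') into the presence of enough independent interpolation conditions. Making this quantitative for an arbitrary knot vector and arbitrary admissible price grid — as opposed to the clean polynomial case, where a degree‑$(K-1)$ defect with $N\ge K$ roots is immediately zero — is the delicate part of the proof.
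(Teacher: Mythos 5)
Your first claim is proved correctly and in essentially the paper's spirit: you exhibit the kernel vector $(\gamma_{1}^{T},\gamma_{2}^{T})^{T}$ directly via linear reproduction, whereas the paper runs the argument in reverse (assume a kernel vector, reduce to the homogeneous system $(c_{2}-p)f_{1}'+f_{2}=0$, $f_{1}+(c_{1}-p)f_{2}'=0$, and deduce $f_{1}=t(p-c_{1})$, $f_{2}=t(p-c_{2})$); the content is the same, and your version is if anything cleaner for establishing rank deficiency, since functional dependence trivially implies dependence of the evaluated columns.

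The second claim is where your proposal has a genuine gap, and you have diagnosed it yourself: you never upgrade the vanishing of the defect splines $P$ and $Q$ at the finitely many $p_{k}$ to $P\equiv Q\equiv0$, and without that upgrade you cannot conclude that the kernel is one-dimensional. Your sketch of how to close it (``a counting of zeros forces every piece to vanish'') does not go through as stated: with, say, one price per knot interval, a piecewise polynomial of degree $r-1$ with one zero per interval and limited inter-knot smoothness need not vanish, and the coupling between $P$ and $Q$ through $u$ and $v$ is exactly the structure you would have to exploit. The paper avoids (rather than solves) this difficulty by arguing about the \emph{matrix of functions}: it shows that the only functional linear dependence among the columns is the one generated by $(p-c_{1},p-c_{2})$, via a per-piece degree count on the spline solutions of the homogeneous ODE (if a piece of $f_{1}$ has degree $d$ with nonzero leading coefficient, substituting $f_{2}=(p-c_{2})f_{1}'$ back forces $d^{2}=d$, hence $d=1$, and smoothness glues the linear pieces into a single global linear function). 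The transfer of that functional statement to the discrete matrix $\mathbb{B}$ is then handled only by the informal caveat ``unless it contains columns of zeros or has fewer rows than columns,'' so the paper, too, leaves the discretization step unproven; but it does at least supply a complete argument for the one-dimensionality of the functional kernel, which your proposal replaces with an appeal to the logarithmic fundamental solution (a fine alternative finish, but one you only reach after the step you concede you cannot yet make). As submitted, your proof establishes rank $\le 2K-1$ but not rank $=2K-1$.
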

\begin{proof}
Write the first order conditions in matrix form 
\begin{align*}
B'^{T}(p)\beta_{1}-\frac{B^{T}(p)\beta_{2}}{p-c_{2}} & =D'(p),\\
-\frac{B^{T}(p)\beta_{1}}{p-c_{1}}+B'^{T}(p)\beta_{2} & =D'(p).
\end{align*}
To prove the first part of Theorem \ref{P: singularity}, it is sufficient
to show that the matrix of functions 
\[
\left(\begin{array}{cc}
B'^{T}(p) & -\frac{B^{T}(p)}{p-c_{2}}\\
-\frac{B^{T}(p)}{p-c_{1}} & B'^{T}(p)
\end{array}\right)
\]
has linearly dependent columns. And since elementary row operations
preserve rank, it is equivalent to show that the columns of 
\[
\left(\begin{array}{cc}
(c_{2}-p)B'^{T}(p) & B^{T}(p)\\
B^{T}(p) & (c_{1}-p)B'^{T}(p)
\end{array}\right)
\]
are linearly dependent. We prove this by using the fact that the elements
of $B(p)$ form a basis of the space $\mathbb{S}$, which is composed
of all the splines on $(p_{min},p_{max})$ with the prescribed order
and knots.

Assume that we have a nonzero vector $v^{T}=(v_{1}^{T},v_{2}^{T})$
such that 
\[
\left(\begin{array}{cc}
(c_{2}-p)B'^{T}(p) & B^{T}(p)\\
B^{T}(p) & (c_{1}-p)B'^{T}(p)
\end{array}\right)\left(\begin{array}{c}
v_{1}\\
v_{2}
\end{array}\right)=0,
\]
or equivalently
\begin{align*}
(c_{2}-p)B'^{T}(p)v_{1}+B^{T}(p)v_{2} & =0,\\
B^{T}(p)v_{1}+(c_{1}-p)B'^{T}(p)v_{2} & =0.
\end{align*}
Let $f_{1}(p)=B^{T}(p)v_{1}$ and $f_{2}(p)=B^{T}(p)v_{2}$. Thus
the above can be rewritten as
\begin{align}
(c_{2}-p)f_{1}'(p)+f_{2}(p) & =0,\nonumber \\
f_{1}(p)+(c_{1}-p)f_{2}'(p) & =0.\label{eq:singularity}
\end{align}
 Recall that $f_{1}(p)$ and $f_{2}(p)$ are splines. Thus (\ref{eq:singularity})
implies that $f_{1}(p)$ and $f_{2}(p)$ must be linear functions
(they are single-piece linear functions because of their smoothness).
Therefore, we must have $f_{1}(p)=t(p-c_{1})$ and $f_{2}(p)=t(p-c_{2})$,
where $t$ is an arbitrary scalar.

Since $f_{1}(p)=t(p-c_{1})\in\mathbb{S}$ and $f_{2}(p)=t(p-c_{2})\in\mathbb{S}$,
$v_{1}$ and $v_{2}$ must exist and are unique. If $t\ne0$, then
we have $v_{1}\ne0$ and $v_{2}\ne0$, thus we proved that $\mathbb{B}$
does not have full rank.

Furthermore, since $t(p-c_{2})$ and $t(p-c_{1})$ are the only forms
that $f_{1}(p)$ and $f_{2}(p)$ can have, it implies that the null
space of $\mathbb{B}$ has only one dimension, i.e., the rank of $\mathbb{B}$
is $2K-1$.
\end{proof}
Theorem \ref{P: singularity} shows that when we have only two firms,
the general solution to the optimization problem ``$\text{minimize}\left\Vert \mathbb{B}\beta-d\right\Vert ^{2}$''
has the form $\beta=\beta^{0}+t\cdot v$, where $t\in\mathbb{R}$
and where $v$ is an eigenvector of $\mathbb{B}^{T}\mathbb{B}$ whose
corresponding eigenvalue is 0. In terms of individual supply functions,
the general solutions are $\hat{s}_{1}=B^{T}(p)(\beta_{1}^{0}+tv_{1})=B^{T}(p)\beta_{1}^{0}+t(p-c_{1})$
and $\hat{s}_{2}=B^{T}(p)(\beta_{2}^{0}+tv_{2})=B^{T}(p)\beta_{2}^{0}+t(p-c_{2})$,
which have the same form as the analytical solutions, showing that
the splines are able to approximate all the solutions. This is the
most important reason why we use least squares for the estimation
of the coefficients. When the market has three or more firms, Theorem
\ref{P: singularity} no longer holds --- $\mathbb{B}$ will generally
have full rank, and consequently the optimal solution will be unique
and does not represent the solution space of the original ODE system.

We would also like to show the asymptotic property of this spline
approximation. We will take cubic splines as an illustration. Proofs
for other types of splines are essentially the same.
\begin{thm}
\label{thm:ODE_limit}If $s_{1}$ and $s_{2}$ are continuously differentiable
on $[p_{a},p_{b}]$, where $p_{a}>\max(c_{1},c_{2})$, and if $\hat{s}_{1}$
and $\hat{s}_{2}$ are piecewise cubic splines and are solution to
the least squares problem (\ref{eq:LSE}), where we place $N$ price
levels $\{p_{k}\}_{k=1}^{N}$ uniformly%
\footnote{This is in fact unnecessary. We place the price levels this way solely
for making the Riemann integral easier to write.%
} among $[p_{a},p_{b}]$ and choose $\{\tau_{t}\}_{t=1}^{K}$ as the
knots, then as the length of the largest knot interval $\vert\tau\vert\rightarrow0$
and $N\rightarrow\infty$, the ODE system (\ref{eq:firstOrderSpline})
will be satisfied by $\hat{s}_{1}$ and $\hat{s}_{2}$, such that
the error functions, $\hat{s}_{-i}'(p)-\frac{\hat{s}_{i}(p)}{p-c_{i}}-D'(p)$,
$i=1,2$, uniformly converge to 0 on $[p_{a},p_{b}]$.%
\footnote{$\hat{s}_{i}$ is determined by $N$ and the knots $\{\tau_{t}\}$,
so it is more rigorous to write $\hat{s}_{i}(p\mid N,\{\tau_{t}\})$.
However, we will proceed with $\hat{s}_{i}$ for conciseness.%
}\end{thm}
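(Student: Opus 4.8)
The plan is to show the two residuals vanish first in mean square and then uniformly, using the least-squares optimality together with standard spline approximation. Throughout I read $s_1,s_2$ as the true equilibrium supply functions, so they satisfy (\ref{eq:firstOrder}) \emph{exactly} on $[p_a,p_b]$, and I use that $p_a>\max(c_1,c_2)$ keeps the coefficients $1/(p-c_i)$ bounded on the whole interval by $M:=1/(p_a-\max(c_1,c_2))$. I also record a harmless normalization: by Theorem \ref{P: singularity} the null space of $\mathbb{B}$ is spanned by the coefficients of the homogeneous spline $(p-c_1,p-c_2)$, and adding a multiple of that function changes $\hat s_2'$ and $\hat s_1/(p-c_1)$ by the same amount, hence leaves both error functions unchanged \emph{pointwise}. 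So the error functions are the same for every minimizer of (\ref{eq:LSE}), and "the" error function is well defined.

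First I would build a good competitor spline. Since $s_i\in C^1[p_a,p_b]$, standard cubic-spline approximation theory (e.g. the clamped interpolant) gives $\tilde s_i\in\mathbb{S}$ with $\|\tilde s_i-s_i\|_\infty\to 0$ and $\|\tilde s_i'-s_i'\|_\infty\to 0$ as $|\tau|\to 0$. Substituting $\tilde s_i$ into the residual and subtracting the exact identity $s_{-i}'-\frac{s_i}{p-c_i}-D'\equiv 0$ yields
\[
\tilde e_i=\big(\tilde s_{-i}'-s_{-i}'\big)-\tfrac{1}{p-c_i}\big(\tilde s_i-s_i\big),
\]
so that $\|\tilde e_i\|_\infty\le\|\tilde s_{-i}'-s_{-i}'\|_\infty+M\|\tilde s_i-s_i\|_\infty=:\varepsilon(|\tau|)\to 0$. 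Thus the competitor $\tilde\beta$ makes the residual uniformly small.

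Next comes mean-square convergence of the least-squares residual. Feasibility of $\tilde\beta$ and optimality of $\hat\beta$ in (\ref{eq:LSE}) give, for every $N$,
\[
\frac1N\sum_{k=1}^N\big(\hat e_1(p_k)^2+\hat e_2(p_k)^2\big)\le\frac1N\sum_{k=1}^N\big(\tilde e_1(p_k)^2+\tilde e_2(p_k)^2\big)\le\varepsilon(|\tau|)^2 .
\]
Fixing the knots and letting $N\to\infty$ on the uniform grid turns the discrete problem into its continuous $L^2$ counterpart, the (unique) residual functions converge, and the left-hand Riemann sum tends to $\frac{1}{p_b-p_a}\int_{p_a}^{p_b}(\hat e_1^2+\hat e_2^2)\,dp$ — this is exactly what the uniform placement of the $p_k$ buys us, as the footnote anticipates. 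Passing the bound to the limit yields $\int_{p_a}^{p_b}(\hat e_1^2+\hat e_2^2)\,dp\le(p_b-p_a)\varepsilon(|\tau|)^2\to 0$, i.e. $L^2$ convergence of the residuals.

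The hard part will be upgrading this from $L^2$ to $L^\infty$. My plan is an equicontinuity argument: if the family $\{\hat e_i\}$ is Lipschitz with a constant $\Lambda$ independent of the mesh, then $\|\hat e_i\|_{L^2}\to 0$ forces $\|\hat e_i\|_\infty\to 0$, since a spike of height $\delta$ in a $\Lambda$-Lipschitz function contributes at least $c\,\delta^3/\Lambda$ to the square integral. Differentiating $\hat e_1=\hat s_2'-\frac{\hat s_1}{p-c_1}-D'$ shows the required bound reduces to a \emph{mesh-independent} bound on $\hat s_i,\hat s_i',\hat s_i''$ on $[p_a,p_b]$, i.e. to ruling out spurious oscillation of the least-squares spline. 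This is the one genuinely delicate point: the spline space contains functions mimicking the second (blow-up) homogeneous solution from (\ref{eq:homogeneous solution}) while carrying almost no residual, so the naive coercivity constant of the least-squares functional degrades as $|\tau|\to 0$. I would handle it by setting $w:=\hat s-\tilde s\in\mathbb{S}\times\mathbb{S}$ and letting $\mathcal{L}$ denote the operator in (\ref{eq:firstOrder}), so that $\mathcal{L}w=\hat e-\tilde e$ is small in $L^2$; I would then split $w$ into its component along the genuine homogeneous spline $(p-c_1,p-c_2)$ (which contributes nothing to the residual and can be gauged away) and a transverse component on which $\mathcal{L}$ is invertible, and argue that even the near-kernel directions approximating the blow-up solution produce uniformly — not merely $L^2$- — small residuals. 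Establishing that this transverse control is uniform in the mesh is where essentially all the work lies; the competitor, optimality, and Riemann-sum steps above are routine by comparison.
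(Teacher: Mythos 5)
Your argument through the $L^2$ stage is essentially the paper's own proof: the paper likewise constructs the complete cubic interpolants $I_4 s_i$ as competitors, uses the de Boor bounds $\left\Vert I_4 g-g\right\Vert_\infty \le C\left|\tau\right|\omega(g',\frac{\left|\tau\right|}{2})$ and $\left\Vert I_4 g'-g'\right\Vert_\infty \le C\omega(g',\frac{\left|\tau\right|}{2})$, subtracts the exact first-order conditions to bound the competitor's residual uniformly, invokes least-squares optimality of $\hat{\beta}$, and passes to the Riemann integral over the uniform grid to conclude that $\int_{p_a}^{p_b}(\hat{e}_1^2+\hat{e}_2^2)\,dp\rightarrow 0$. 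Your preliminary observation that the error functions are invariant along the one-dimensional null space of $\mathbb{B}$ is a useful addition the paper leaves implicit.

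Where you diverge is the $L^2$-to-$L^\infty$ upgrade, and there your caution is warranted but your proof is incomplete. The paper dispatches this step in a single sentence --- uniform convergence ``follows naturally'' because $[p_a,p_b]$ is closed and the error functions are continuous --- which is not by itself a valid argument: $L^2$ convergence of a sequence of continuous functions on a compact interval does not imply uniform convergence without a mesh-independent modulus of continuity for the family, which is exactly the equicontinuity you identify as the missing ingredient. Your proposed repair (a uniform Lipschitz bound on $\hat{e}_i$, reduced to mesh-independent control of $\hat{s}_i$, $\hat{s}_i'$, $\hat{s}_i''$ by splitting off the exact kernel and handling the near-kernel direction that shadows the logarithmic homogeneous solution) is the right kind of argument, but as you concede it is only a plan: the transverse coercivity uniform in $\left|\tau\right|$ is asserted, not established. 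So as submitted your proof has a genuine gap at the final step --- though it is the same step the paper's proof glosses over, and you have at least located the difficulty precisely where it lies.
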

\begin{proof}
Since $\hat{s}_{1}$, $\hat{s}_{2}$ and $D'$ are continuous on $[p_{a},p_{b}]$,
and since $p_{a}>\max(c_{1},c_{2})$, the sum of squared errors 
\[
\sum_{i=1,2}\left(\hat{s}_{-i}'(p)-\frac{\hat{s}_{i}(p)}{p-c_{i}}-D'(p)\right)^{2}
\]
is Riemann integrable on $[p_{a},p_{b}]$. We will show that the integral
\[
\int_{p_{a}}^{p_{b}}\left[\sum_{i=1,2}\left(\hat{s}_{-i}'(p)-\frac{\hat{s}_{i}(p)}{p-c_{i}}-D'(p)\right)^{2}\right]dp\rightarrow0,
\]
as $\vert\tau\vert\rightarrow0$ and $N\rightarrow\infty$.

Since $(\hat{s}_{1},\hat{s}_{2})$ is a solution to the least squares
problem, $\hat{s}_{1}$ and $\hat{s}_{2}$ minimize 
\[
\sum_{k=1}^{N}\left[\sum_{i=1,2}\left(\hat{s}_{-i}'(p_{k})-\frac{\hat{s}_{i}(p_{k})}{p_{k}-c_{i}}-D'(p_{k})\right)^{2}\right]
\]
among all the functions in $\mathbb{S}$, the space that contains
all the piecewise cubic splines with the selected knots $\{\tau_{t}\}$.
Let $I_{4}s_{1}$ and $I_{4}s_{2}$ denote the complete cubic interpolation
of $s_{1}$ and $s_{2}$ with knots $\{\tau_{t}\}$.%
\footnote{Similar to $\hat{s}_{i}$, it is actually $I_{4}s_{i}(p\mid\{\tau_{t}\})$,
but for conciseness we will use $I_{4}s_{i}$.%
} Since $I_{4}s_{1},I_{4}s_{2}\in\mathbb{S}$, we must have 
\begin{align}
 & \sum_{k=1}^{N}\left[\sum_{i=1,2}\left(\hat{s}_{-i}'(p_{k})-\frac{\hat{s}_{i}(p_{k})}{p_{k}-c_{i}}-D'(p_{k})\right)^{2}\right]\nonumber \\
\le & \sum_{k=1}^{N}\left[\sum_{i=1,2}\left(I_{4}s_{-i}'(p_{k})-\frac{I_{4}s_{i}(p_{k})}{p_{k}-c_{i}}-D'(p_{k})\right)^{2}\right].\label{eq:errorUB1}
\end{align}

There are upper bounds for the error of the complete cubic interpolations.
For example, from de Boor \cite{de2001practical} one knows that for
any $g\in C^{1}[a,b]$, we have $\left\Vert I_{4}g-g\right\Vert _{\infty}\le\frac{57}{8}\left|\tau\right|\omega(g',\frac{\left|\tau\right|}{2})$
and $\left\Vert I_{4}g'-g'\right\Vert _{\infty}\le\frac{57}{4}\omega(g',\frac{\left|\tau\right|}{2})$,
where $\left\Vert f\right\Vert _{\infty}=\sup\{\left|f(x)\right|\mid x\in[a,b]\}$
and $\omega(f,h)=\sup\{\left|f(x)-f(y)\right|\mid\left|x-y\right|<h\}$.
So there exists a constant $C$, such that for any $g\in C^{1}[a,b]$,
$\left\Vert I_{4}g-g\right\Vert _{\infty}\le C\left|\tau\right|\omega(g',\frac{\left|\tau\right|}{2})$
and $\left\Vert I_{4}g'-g'\right\Vert _{\infty}\le C\omega(g',\frac{\left|\tau\right|}{2})$.

Therefore as $\vert\tau\vert\rightarrow0$, for any $p\in[p_{a},p_{b}]$,
\begin{align}
 & \left(I_{4}s_{-i}'(p)-\frac{I_{4}s_{i}(p)}{p-c_{i}}-D'(p)\right)^{2}\nonumber \\
= & \left((I_{4}s_{-i}'(p)-s_{-i}'(p))-\left(\frac{I_{4}s_{i}(p)}{p-c_{i}}-\frac{s_{i}(p)}{p-c_{i}}\right)+s_{-i}'(p)-\frac{s_{i}(p)}{p-c_{i}}-D'(p)\right)^{2}\nonumber \\
= & \left((I_{4}s_{-i}'(p)-s_{-i}'(p))-\left(\frac{I_{4}s_{i}(p)}{p-c_{i}}-\frac{s_{i}(p)}{p-c_{i}}\right)\right)^{2}\nonumber \\
\le & C^{2}\left(\omega(s_{-i}',\frac{\left|\tau\right|}{2})+\frac{\left|\tau\right|\omega(s_{i}',\frac{\left|\tau\right|}{2})}{(p_{a}-c_{i})}\right)^{2}\rightarrow0,\label{eq:errorUB2}
\end{align}
where the second equality is because $s_{-i}'(p)-\frac{s_{i}(p)}{p-c_{i}}=D'(p)$,
$i=1,2$, a necessary condition for an SFE, and the inequality is
by applying the error bounds and the triangle inequality. Convergence
is due to uniform continuity of $s_{i}'$, $i=1,2$, on $[p_{a},p_{b}]$.

So by using the definition of Riemann integral, we have
\begin{align*}
0\le & \lim_{\vert\tau\vert\rightarrow0}\int_{p_{a}}^{p_{b}}\left[\sum_{i=1,2}\left(\hat{s}_{-i}'(p)-\frac{\hat{s}_{i}(p)}{p-c_{i}}-D'(p)\right)^{2}\right]dp\\
= & \lim_{\vert\tau\vert\rightarrow0}\lim_{N\rightarrow\infty}\left[\frac{p_{b}-p_{a}}{N}\sum_{k=1}^{N}\left[\sum_{i=1,2}\left(\hat{s}_{-i}'(p_{k})-\frac{\hat{s}_{i}(p_{k})}{p_{k}-c_{i}}-D'(p_{k})\right)^{2}\right]\right]\\
\le & \lim_{\vert\tau\vert\rightarrow0}\lim_{N\rightarrow\infty}\left[\frac{p_{b}-p_{a}}{N}\sum_{k=1}^{N}\left[\sum_{i=1,2}\left(I_{4}s_{-i}'(p_{k})-\frac{I_{4}s_{i}(p_{k})}{p_{k}-c_{i}}-D'(p_{k})\right)^{2}\right]\right]\\
\le & \lim_{\vert\tau\vert\rightarrow0}\lim_{N\rightarrow\infty}\frac{p_{b}-p_{a}}{N}N\left[\sum_{i=1,2}C^{2}\left(\omega(s_{-i}',\frac{\left|\tau\right|}{2})+\frac{\left|\tau\right|\omega(s_{i}',\frac{\left|\tau\right|}{2})}{(p_{a}-c_{i})}\right)^{2}\right]\\
= & \lim_{\vert\tau\vert\rightarrow0}(p_{b}-p_{a})\left[\sum_{i=1,2}C^{2}\left(\omega(s_{-i}',\frac{\left|\tau\right|}{2})+\frac{\left|\tau\right|\omega(s_{i}',\frac{\left|\tau\right|}{2})}{(p_{a}-c_{i})}\right)^{2}\right]\\
= & 0,
\end{align*}
where the second inequality is by (\ref{eq:errorUB1}), and the third
inequality and the convergence are by (\ref{eq:errorUB2}).

The uniform convergence follows naturally as $[p_{a},p_{b}]$ is closed
and the error functions $\hat{s}_{-i}'(p)-\frac{\hat{s}_{i}(p)}{p-c_{i}}-D'(p)$,
$i=1,2$, are continuous on $[p_{a},p_{b}]$.

When $s_{1}$ and $s_{2}$ satisfy stronger conditions, we can use
tighter bounds. For instance, Hall \cite{hall1968error} and Hall
and Meyer \cite{hall1976optimal} show that if $g\in C^{4}[a,b]$,
then the tightest bounds are $\left\Vert I_{4}g-g\right\Vert _{\infty}\le\frac{5}{384}\left|\tau\right|^{4}\Vert g^{(4)}\Vert_{\infty}$
and $\left\Vert I_{4}g'-g'\right\Vert _{\infty}\le\frac{1}{24}\left|\tau\right|^{3}\Vert g^{(4)}\Vert_{\infty}$.
\end{proof}
Now we have a simple form of the spline approximations, which we know
will converge to the true solutions as the mesh of the splines becomes
finer. In the next section we will take this advantage and find the
SFE with capacity constraints.

\section{SFE of Duopolies with Capacity Constraints\label{sec:twoGenCap}}

In Section \ref{sec:SFE-without-cap} we solved the necessary conditions
for duopoly markets. In this section, we still focus on duopoly markets,
and we use the solutions from Section \ref{sec:SFE-without-cap} to
find the SFE with capacity constraints. SFE of more firms will be
discussed in the next section.

In the following, we denote by $f(x^{-})$ and $f(x^{+})$ the left
and right limits of $f$, respectively. Similarly, we denote by $f'(x^{-})$
and $f'(x^{+})$ the left and right derivatives of $f$, respectively.
Same as in Section \ref{sec:SFE-without-cap}, we use $p_{Cap_{i}}$
for the price where $s_{i}$ reaches the capacity, i.e., $p_{Cap_{i}}=\inf\{p\mid s_{i}(p)=Cap_{i}\}$.
\begin{prop}
\label{P:smooth_i}In a 2-firm-SFE, assume firm 1 reaches the capacity
earlier than firm 2 does, i.e., $p_{Cap_{1}}<p_{Cap_{2}}$. Also assume
that $s_{2}(p_{Cap_{1}})>0$. Then $s_{1}$ is differentiable at $p_{Cap_{1}}$,
and the derivative is 0. In other words, the supply function that
reaches the capacity first must reach it smoothly.\end{prop}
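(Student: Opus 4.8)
The plan is to establish that both one-sided derivatives of $s_{1}$ at $p_{Cap_{1}}$ are zero. The right derivative is immediate: since $s_{1}$ is non-decreasing, bounded above by $Cap_{1}$, and attains $Cap_{1}$ at $p_{Cap_{1}}$, we have $s_{1}(p)=Cap_{1}$ on a right neighborhood of $p_{Cap_{1}}$ (up to $p_{Cap_{2}}$), so $s_{1}'(p_{Cap_{1}}^{+})=0$. All the work is in the left derivative.

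First I would use firm 1's first order condition on the interval where both firms are strictly interior. For $p<p_{Cap_{1}}$ we have $0<s_{i}(p)<Cap_{i}$ for both firms, so (\ref{eq:firstOrder}) applies and
\begin{equation*}
s_{1}'(p)=D'(p)+\frac{s_{2}(p)}{p-c_{2}}.
\end{equation*}
Because $p_{Cap_{1}}>\max(c_{1},c_{2})$, the point $p_{Cap_{1}}$ is bounded away from $c_{2}$, and by the continuity result of Anderson and Hu cited above (supply functions are continuous away from the marginal costs), $s_{2}$ is continuous there. Letting $p\uparrow p_{Cap_{1}}$ then gives
\begin{equation*}
s_{1}'(p_{Cap_{1}}^{-})=D'(p_{Cap_{1}})+\frac{s_{2}(p_{Cap_{1}})}{p_{Cap_{1}}-c_{2}},
\end{equation*}
so it remains only to identify the value $s_{2}(p_{Cap_{1}})$.

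The key observation is that firm 2's optimality condition changes form across $p_{Cap_{1}}$. For $p\in(p_{Cap_{1}},p_{Cap_{2}})$ firm 1 is frozen at capacity, so in firm 2's profit (\ref{eq:maxProfit}) the term $\sum_{i\ne2}s_{i}(p)$ is the constant $Cap_{1}$ and contributes nothing to the derivative in $p$. Differentiating firm 2's profit and setting it to zero --- which is legitimate because $s_{2}(p_{Cap_{1}})>0$ and hence, by continuity, $s_{2}>0$ on a right neighborhood, so firm 2's maximizer is interior --- yields the reduced condition
\begin{equation*}
s_{2}(p)=-(p-c_{2})D'(p),\qquad p\in(p_{Cap_{1}},p_{Cap_{2}}).
\end{equation*}
Taking $p\downarrow p_{Cap_{1}}$ and again using continuity of $s_{2}$ at $p_{Cap_{1}}$ gives $s_{2}(p_{Cap_{1}})=-(p_{Cap_{1}}-c_{2})D'(p_{Cap_{1}})$, that is, $s_{2}(p_{Cap_{1}})/(p_{Cap_{1}}-c_{2})=-D'(p_{Cap_{1}})$.

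Substituting this into the expression for $s_{1}'(p_{Cap_{1}}^{-})$ collapses it to $D'(p_{Cap_{1}})-D'(p_{Cap_{1}})=0$. Since both one-sided derivatives vanish, $s_{1}$ is differentiable at $p_{Cap_{1}}$ with $s_{1}'(p_{Cap_{1}})=0$, which is the assertion. The step I expect to be the crux is the derivation of firm 2's reduced first order condition to the right of $p_{Cap_{1}}$: one must argue that when firm 1 sits at capacity its slope drops out of firm 2's problem, and recognize that the hypothesis $s_{2}(p_{Cap_{1}})>0$ is precisely what guarantees firm 2's maximizer is interior, so the condition holds with equality rather than as a boundary inequality. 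The remainder is continuity bookkeeping enabled by $p_{Cap_{1}}>\max(c_{1},c_{2})$.
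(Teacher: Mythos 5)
Your proposal is correct and follows essentially the same route as the paper: both arguments rest on firm 2's first-order condition rewritten as $s_{2}(p)=(p-c_{2})(s_{1}'(p)-D'(p))$, evaluated on each side of $p_{Cap_{1}}$ together with $s_{1}'(p_{Cap_{1}}^{+})=0$. The only cosmetic difference is in the closing step: you equate the two one-sided limits of $s_{2}$ via its continuity at $p_{Cap_{1}}$ (legitimately, by the cited Anderson--Hu result), whereas the paper argues by contradiction that $s_{1}'(p_{Cap_{1}}^{-})>0$ would force a downward jump in $s_{2}$, violating monotonicity.
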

\begin{proof}
Since and $s_{2}(p_{Cap_{1}})>0$, we have $\max(c_{1},c_{2})<p_{Cap_{1}}<p_{Cap_{2}}$
(see Anderson and Hu \cite{anderson2008finding}). So there exists
$\delta>0$, such that $s_{1}(p)$ is differentiable for $p\in(p_{Cap_{1}}-\delta,p_{Cap_{1}}+\delta)\backslash\{p_{Cap_{1}}\}$.
And when $s_{1}(p)$ is differentiable, the first order condition
(\ref{eq:firstOrder}) for $s_{2}(p)$ can be written as 
\begin{equation}
s_{2}(p)=(p-c_{2})(s_{1}'(p)-D'(p)).\label{eq:firstOrderKai}
\end{equation}

Once $s_{1}$ reaches $Cap_{1}$, it cannot decrease, as we require
supply functions to be non-decreasing. Thus $s_{1}(p)=Cap_{1}$ for
$p\ge p_{Cap_{1}}$, and $s_{1}'(p_{Cap_{1}}^{+})=0$. If $s_{1}$
were not smooth at $p_{Cap_{1}}$, i.e. $s_{1}'(p_{Cap_{1}}^{-})>0$,
then from (\ref{eq:firstOrderKai}), we must have 
\begin{eqnarray*}
s_{2}(p_{Cap_{1}}^{-}) & =(p_{Cap_{1}}-c_{2})(s_{1}'(p_{Cap_{1}}^{-})-D'(p_{Cap_{1}}))\\
 & >(p_{Cap_{1}}-c_{2})(s_{1}'(p_{Cap_{1}}^{+})-D'(p_{Cap_{1}})) & =s_{2}(p_{Cap_{1}}^{+}).
\end{eqnarray*}
Thus $s_{2}$ would be decreasing at $p_{Cap_{1}}$, and it would
be disqualified as a supply function. Therefore, we must have $s_{1}'(p_{Cap_{1}}^{-})=s_{1}'(p_{Cap_{1}}^{+})=0$.\end{proof}
\begin{prop}
\label{P:nonsmooth_j}Under the same assumptions of Proposition \ref{P:smooth_i},
if $D(p)$ is twice differentiable, then the derivative of $s_{2}$
has a jump at $p_{Cap_{1}}$, i.e., $s_{2}'(p_{Cap_{1}}^{+})>s_{2}'(p_{Cap_{1}}^{-})$.\end{prop}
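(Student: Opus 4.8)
The plan is to compute the two one-sided derivatives $s_{2}'(p_{Cap_{1}}^{-})$ and $s_{2}'(p_{Cap_{1}}^{+})$ from the first order conditions and show their difference is strictly positive. The organizing observation is that firm~2's first order condition (\ref{eq:firstOrderKai}), $s_{2}(p)=(p-c_{2})(s_{1}'(p)-D'(p))$, holds on \emph{both} sides of $p_{Cap_{1}}$, since firm~2 stays interior for $p$ near $p_{Cap_{1}}$ (we are in the regime $\max(c_{1},c_{2})<p_{Cap_{1}}<p_{Cap_{2}}$ and $s_{2}(p_{Cap_{1}})>0$), whereas firm~1's condition from (\ref{eq:firstOrder}), $s_{2}'(p)=D'(p)+\frac{s_{1}(p)}{p-c_{1}}$, is available only for $p<p_{Cap_{1}}$ where firm~1 is still interior.

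First I would evaluate the right derivative. For $p\in(p_{Cap_{1}},p_{Cap_{2}})$ firm~1 is capped, so $s_{1}\equiv Cap_{1}$ and $s_{1}'\equiv0$; substituting into (\ref{eq:firstOrderKai}) gives $s_{2}(p)=-(p-c_{2})D'(p)$, and here the hypothesis $D\in C^{2}$ is exactly what lets me differentiate and obtain $s_{2}'(p_{Cap_{1}}^{+})=-D'(p_{Cap_{1}})-(p_{Cap_{1}}-c_{2})D''(p_{Cap_{1}})$. For the left derivative I would pass to the limit in firm~1's condition, using continuity of $s_{1}$ and $s_{1}(p_{Cap_{1}})=Cap_{1}$, giving $s_{2}'(p_{Cap_{1}}^{-})=D'(p_{Cap_{1}})+\frac{Cap_{1}}{p_{Cap_{1}}-c_{1}}$. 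Rather than comparing these two expressions head on, I would differentiate (\ref{eq:firstOrderKai}) on each side: because $s_{1}'(p_{Cap_{1}}^{-})=s_{1}'(p_{Cap_{1}}^{+})=0$ by Proposition~\ref{P:smooth_i}, and $s_{1}''(p_{Cap_{1}}^{+})=0$ since $s_{1}$ is constant to the right, the whole jump collapses to
\[
s_{2}'(p_{Cap_{1}}^{+})-s_{2}'(p_{Cap_{1}}^{-})=-(p_{Cap_{1}}-c_{2})\,s_{1}''(p_{Cap_{1}}^{-}).
\]
Since $p_{Cap_{1}}>\max(c_{1},c_{2})$ makes $(p_{Cap_{1}}-c_{2})>0$, the claim reduces \emph{exactly} to showing that $s_{1}$ is strictly concave as it meets its capacity, i.e.\ $s_{1}''(p_{Cap_{1}}^{-})<0$.

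The weak inequality $s_{1}''(p_{Cap_{1}}^{-})\le0$ is immediate and already yields $s_{2}'(p_{Cap_{1}}^{+})\ge s_{2}'(p_{Cap_{1}}^{-})$: since $s_{1}$ is non-decreasing with $s_{1}'\ge0$ on the left and $s_{1}'(p_{Cap_{1}}^{-})=0$, the one-sided limit $s_{1}''(p_{Cap_{1}}^{-})=\lim_{h\to0^{+}}\frac{0-s_{1}'(p_{Cap_{1}}-h)}{h}\le0$. The hard part will be upgrading this to a strict inequality, because monotonicity alone permits a tangential approach with $s_{1}''(p_{Cap_{1}}^{-})=0$ (for instance $s_{1}'$ vanishing to third order). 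This is where some global property of the equilibrium must enter, since the pointwise relations leave the sign of the $\frac{Cap_{1}}{p_{Cap_{1}}-c_{1}}$ contribution undetermined.

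To resolve strictness I would argue by contradiction. Substituting firm~1's condition into the derivative of firm~2's condition decouples the system into the second order equation
\[
s_{1}''+\frac{s_{1}'}{p-c_{2}}-\frac{s_{1}}{(p-c_{1})(p-c_{2})}=D''+\frac{2D'}{p-c_{2}},
\]
whose evaluation at $p_{Cap_{1}}^{-}$ (with $s_{1}'=0$, $s_{1}=Cap_{1}$) pins down $s_{1}''(p_{Cap_{1}}^{-})$ explicitly. Assuming $s_{1}''(p_{Cap_{1}}^{-})=0$ forces an exact identity among $Cap_{1}$, the cost gaps, and $D',D''$ at $p_{Cap_{1}}$; I would then show this is incompatible with $s_{1}$ being a feasible equilibrium branch just below $p_{Cap_{1}}$ — equivalently, that the backward solution of the displayed equation from the data $s_{1}(p_{Cap_{1}})=Cap_{1}$, $s_{1}'(p_{Cap_{1}})=0$ cannot remain non-decreasing and strictly below $Cap_{1}$ unless $s_{1}''(p_{Cap_{1}}^{-})<0$. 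This feasibility step, invoking that a genuine SFE bounds $Cap_{1}$ relative to the demand slope, is the main obstacle; once it is in place the strict jump $s_{2}'(p_{Cap_{1}}^{+})>s_{2}'(p_{Cap_{1}}^{-})$ follows from the reduction above.
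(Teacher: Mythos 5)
Your reduction coincides exactly with the paper's: differentiate (\ref{eq:firstOrderKai}), use Proposition \ref{P:smooth_i} to kill the $s_{1}'$ terms on both sides of $p_{Cap_{1}}$, and conclude that the jump is $s_{2}'(p_{Cap_{1}}^{+})-s_{2}'(p_{Cap_{1}}^{-})=-(p_{Cap_{1}}-c_{2})\,s_{1}''(p_{Cap_{1}}^{-})$, so that everything hinges on $s_{1}''(p_{Cap_{1}}^{-})<0$. Where you differ is only in candor about that last step: the paper simply asserts ``$s_{1}''(p_{Cap_{1}}^{-})<0$'' with no argument, whereas you correctly observe that monotonicity together with $s_{1}'(p_{Cap_{1}}^{-})=0$ yields only the weak inequality $s_{1}''(p_{Cap_{1}}^{-})\le0$, and that a tangential approach with vanishing one-sided second derivative is not excluded by any of the local conditions you have used.

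However, your proposal does not close this gap; it only announces a plan (contradiction via the decoupled second-order equation plus a ``feasibility step'' that you yourself flag as the main obstacle). The gap is real and can be made quantitative: evaluating your decoupled equation at $p_{Cap_{1}}^{-}$ with $s_{1}=Cap_{1}$ and $s_{1}'=0$ gives
\[
s_{1}''(p_{Cap_{1}}^{-})=D''(p_{Cap_{1}})+\frac{2D'(p_{Cap_{1}})}{p_{Cap_{1}}-c_{2}}+\frac{Cap_{1}}{(p_{Cap_{1}}-c_{1})(p_{Cap_{1}}-c_{2})},
\]
which is negative precisely when $Cap_{1}<-\left(2D'(p_{Cap_{1}})+D''(p_{Cap_{1}})(p_{Cap_{1}}-c_{2})\right)(p_{Cap_{1}}-c_{1})$. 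The local equilibrium conditions only give the \emph{lower} bound $Cap_{1}\ge-D'(p_{Cap_{1}})(p_{Cap_{1}}-c_{1})$ (from $s_{2}'(p_{Cap_{1}}^{-})\ge0$), so the required \emph{upper} bound is an additional fact that must come from the global structure of the equilibrium or be imposed as a hypothesis; as written, neither your proposal nor the paper supplies it. In short: same route as the paper up to and including the decisive inequality, but the decisive inequality remains unproved in your write-up (and is merely asserted in the paper's own proof).
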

\begin{proof}
Differentiate both sides of (\ref{eq:firstOrderKai}), we have 
\[
s_{2}'(p)=(s_{1}'(p)-D'(p))+(p-c_{2})(-D''(p)+s_{1}''(p)).
\]
Since $s_{1}''(p_{Cap_{1}}^{-})<0$, $s_{1}''(p_{Cap_{1}}^{+})=0$,
and Proposition \ref{P:smooth_i} shows that $s_{1}'(p_{Cap_{1}})=0$,
the left and right limits of $s_{2}'$ must have the relationship
\begin{eqnarray*}
s_{2}'(p_{Cap_{1}}^{-}) & =-D'(p_{Cap_{1}})+(p_{Cap_{1}}-c_{2})(s_{1}''(p_{Cap_{1}}^{-})-D''(p_{Cap_{1}}))\\
 & <-D'(p_{Cap_{1}})+(p_{Cap_{1}}-c_{2})(s_{1}''(p_{Cap_{1}}^{+})-D''(p_{Cap_{1}})) & =s_{2}'(p_{Cap_{1}}^{+}).
\end{eqnarray*}

\end{proof}
Propositions \ref{P:smooth_i} and \ref{P:nonsmooth_j} help us understand
the nature of the SFE with capacity constraints. Proposition \ref{P:smooth_i}
also provides a hint on how to find the equilibrium.

Recall from Section \ref{sec:SFE-without-cap} that a general solution
can be written as $\beta=\beta^{0}+t\cdot v$, where $t\in\mathbb{R}$
and where $v$ is the eigenvector of $\mathbb{B}^{T}\mathbb{B}$ that
corresponds to the eigenvalue 0. Note that $\beta$ is just a solution
to the ODE system, and $B^{T}(p)\cdot\beta_{i}$, $i=1,2$ may well
be decreasing or even negative at some part of $(p_{min},p_{max})$.
Our aim is to find the $\beta$, by adjusting $t$, such that $B^{T}(p)\cdot\beta_{1}$
is a nondecreasing curve, and has maximum $Cap_{1}$ at a price $p>p_{min}$,
which we define as $p_{Cap_{1}}$, and that $B^{T}(p)\cdot\beta_{2}$
is nondecreasing from $p_{min}$ to $p_{Cap_{1}}$ with $B^{T}(p_{Cap_{1}})\cdot\beta_{2}<Cap_{2}$.
(Swap 1 and 2 if necessary.) If $p_{Cap_{1}}\ge p_{max}$, then the
capacities are not binding. If $p_{Cap_{1}}<p_{max}$, then $Cap_{1}$
is binding, and the estimated equilibrium will be 
\[
\hat{s}_{1}(p)=\begin{cases}
\max(-D'(p)(p-c_{1}),0), & p\le p_{min};\\
B^{T}(p)\cdot\beta_{1}, & p_{min}<p\le p_{Cap_{1}};\\
Cap_{1}, & p_{Cap_{1}}<p<p_{max};
\end{cases}
\]
and 
\[
\hat{s}_{2}(p)=\begin{cases}
\max(-D'(p)(p-c_{2}),0), & p\le p_{min};\\
B^{T}(p)\cdot\beta_{2}, & p_{min}<p\le p_{Cap_{1}};\\
\min(-D'(p)(p-c_{2}),Cap_{2}), & p_{Cap_{1}}<p<p_{max}.
\end{cases}
\]

Monotonicity is not an issue at the lower end where $p\le p_{min}=\max(c_{1},c_{2})$.
In this price range, the firm with the higher marginal cost does not
produce, and the one with the lower marginal cost outputs at the monopolistic
level. When $p$ reaches $\max(c_{1},c_{2})$, the high cost firm
begins to produce, and (\ref{eq:firstOrderKai}) tells us that the
low cost firm can only have a sudden increase in supply at that price.
Hence there is no issue with monotonicity.

In practice, finding the appropriate $\beta=\beta^{0}+t\cdot v$ is
easy. By plotting the splines, we can easily spot the trend of how
the supply functions change when we adjust $t$, and we can also see
intuitively that in general there can be at most one SFE with capacities
constraints (sometimes an SFE just does not exist). If we are convinced
that an SFE exists, we just need to do a linear search (thanks to
the 1-dimensional solution space) to find the $t$ that makes one
of the supply curves reaches its capacity smoothly, according to Proposition
\ref{P:smooth_i}.

Theorem \ref{thm:ODE_limit} guarantees that in the limit situation,
$\hat{s}_{1}$ and $\hat{s}_{2}$ are solution to the ODE system (\ref{eq:firstOrderSpline})
for $p\in(p_{min},p_{Cap_{1}})$, and by Proposition 3 in Holmber
et al.~\cite{holmberg2008supply}, the $\hat{s}_{1}$ and $\hat{s}_{2}$
so constructed are indeed a supply function equilibrium.
\begin{prop}
\label{P:uniqueness}If $D(p_{min})+\varepsilon_{min}<0$, then there
can be at most one (strong) SFE with capacity constraints.\end{prop}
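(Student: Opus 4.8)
The plan is to show that any two strong SFEs with binding capacities must coincide, by reducing the comparison to a single scalar and then eliminating that scalar with the smooth-reaching property of Proposition \ref{P:smooth_i}. Throughout I take $c_1\le c_2$, so that $p_{min}=c_2$ and firm $1$ is the low-cost firm; the opposite labelling is symmetric. First I would pin down the low-price behaviour: for $p<c_2$ firm $2$ cannot produce at a profit, so $s_2\equiv0$ there and firm $1$ is a residual monopolist whose schedule is forced to be $s_1(p)=\max(-D'(p)(p-c_1),0)$, whence every SFE agrees on $[0,p_{min}]$. This is where the hypothesis $D(p_{min})+\varepsilon_{min}<0$ enters: since $D$ is decreasing, $D(p)+\varepsilon_{min}<0$ for every $p\ge p_{min}$, so the smallest shocks clear strictly below $p_{min}$, the monopoly region is genuinely traversed, and firm $2$ becomes strictly positive immediately above $p_{min}$. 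That positivity is exactly what is needed to invoke Proposition \ref{P:smooth_i} below.

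Next I would exploit the structure of the interior problem. On $(p_{min},\min(p_{Cap_1},p_{Cap_2}))$ both firms are strictly interior, so both equilibria solve the linear system \eqref{eq:firstOrder}. Because feasible supply functions are bounded while the second fundamental solution of the homogeneous system blows up as $p\to c_2^{+}$, neither interior solution can contain that component; by the homogeneous-solution computation leading to \eqref{eq:homogeneous solution}, the two equilibria therefore differ on this common interval by exactly $(t(p-c_1),\,t(p-c_2))$ for a single scalar $t\in\mathbb{R}$, and it suffices to prove $t=0$.

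To force $t=0$ I would compare slopes. Suppose $t>0$ (otherwise interchange the two equilibria). Writing the ``larger'' equilibrium as the ``smaller'' one plus the perturbation, the first firm $k$ to reach its capacity in the larger equilibrium has slope $s_k'(p)+t$, where $s_k'\ge0$ because the smaller equilibrium is a feasible, nondecreasing schedule; this slope is thus bounded below by $t>0$ throughout the interior, in particular at the capacity price. But Proposition \ref{P:smooth_i} says the first firm to cap must do so with a zero (left) derivative, contradicting a slope $\ge t>0$. Hence $t=0$, the two equilibria coincide on the common interior interval, and continuing the solution through the capacity region by the explicit formulas of Section \ref{sec:twoGenCap} shows they coincide on all of $[0,p_{max}]$.

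I expect the main obstacle to be the capacity bookkeeping that makes Proposition \ref{P:smooth_i} genuinely applicable: one must check, from the hypothesis, that some capacity actually binds at a price exceeding $p_{min}$ at which the competitor still produces a positive quantity, and one must dispose of the degenerate cases in which firm $1$ already reaches $Cap_1$ inside the monopoly region (so that both schedules are pinned and uniqueness is immediate) or in which no capacity binds at all. Ensuring the slope comparison is legitimate exactly at the capacity price --- passing to one-sided derivatives and checking that this price lies in the smaller equilibrium's smooth interior region --- is the remaining point that needs care.
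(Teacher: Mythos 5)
Your argument is correct and follows essentially the same route as the paper's proof: the hypothesis $D(p_{min})+\varepsilon_{min}<0$ forces the equilibrium to extend below $p_{min}$, the homogeneous-solution analysis reduces the difference of any two equilibria on the common interior interval to a single scalar $t$ via \eqref{eq:homogeneous solution}, and Proposition \ref{P:smooth_i} then forces $t=0$. The only cosmetic difference is that you dispose of $t<0$ by interchanging the roles of the two equilibria, whereas the paper treats $t>0$ and $t<0$ as separate cases against a fixed reference equilibrium (for $t<0$ noting directly that $\bar{s}_1'(p_{Cap_1})=t<0$ disqualifies $\bar{s}_1$ as a supply function).
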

\begin{proof}
The condition $D(p_{min})+\varepsilon_{min}<0$ means that it is possible
that the demand is sometimes really low, and the market clearing price
must be lower than $p_{min}=\max(c_{1},c_{2})$, thus by its definition,
an SFE has to include prices below $p_{min}$, which further means
that when $p\in(p_{min},\min(p_{Cap_{1}},p_{Cap_{2}}))$, the difference
between two equilibria has to be $t(p-c_{1})$ for $s_{1}(p)$ and
$t(p-c_{2})$ for $s_{2}(p)$, for some $t\in\mathbb{R}$, according
to (\ref{eq:homogeneous solution}). Without loss of generality, assume
$s_{1}(p)$ reaches its capacity first, at $p_{Cap_{1}}$. Proposition
\ref{P:smooth_i} shows that $s_{1}'(p_{Cap_{1}})=0$. If $\bar{s}_{1}(p)$
is a supply function of firm 1 in any equilibrium, we must have $\bar{s}_{1}(p)=s_{1}(p)+t(p-c_{1})$
for some $t$. If $t$ was positive, then $\bar{s}_{1}(p)$ would
reach its capacity at a price $\bar{p}<p_{Cap_{1}}$. Since $s_{1}'(\bar{p})\ge0$,
we have $\bar{s}_{1}'(\bar{p})=s_{1}'(\bar{p})+t>0$, which contradicts
with Proposition \ref{P:smooth_i}. If $t$ was negative, then we
have $\bar{s}_{1}(p)=s_{1}(p)+t(p-c_{1})<Cap_{1}$ for $p\le p_{Cap_{1}}$,
and $\bar{s}_{1}'(p_{Cap_{1}})=s_{1}'(p_{Cap_{1}})+t=0+t<0$, which
disqualifies $\bar{s}_{1}(p)$ as a supply function. Therefore $t$
has to be 0, which means $\bar{s}_{1}=s_{1}$, and hence we cannot
have two distinct SFE.
\end{proof}
All we discuss in this paper are strong SFEs, which are not guaranteed
to exist. However, Anderson \cite{anderson2011supply} shows that
at least for duopoly markets, weak SFEs always exist, which is beyond
the discussion of this paper.

\section{A General Method for Finding SFE\label{sec:general method}}

When the market has more than two firms, the solution to the least
squares problem will be unique. Thus the method used in Section \ref{sec:twoGenCap}
for finding the SFE with capacity constraints will not work, and hence
we need a new method. Also, we would like a method that handles general
cost functions, instead of just linear ones. But first of all, we
would like to show some conditions that an SFE must satisfy when we
have more than two firms.

\subsection{Properties at the Nonsmooth Points in Multiplayer SFE\label{sub:Difficulty}}

In Section \ref{sec:twoGenCap} we saw that when we have two firms,
Proposition \ref{P:smooth_i} shows that the supply function that
reaches its capacity first must reach it smoothly. When we have $m$
firms, $m>2$, for the same reason, the $m$-1th supply function to
reach its capacity should still reach it smoothly, but the first $m-2$
supply functions do not have to.

In Anderson and Hu \cite{anderson2008finding} the authors show that
in an SFE, a supply curve can be discontinuous only at a price where
another firm begins to produce, while all the other producing firms
are at their capacities. This leads to the following consequences,
which should be observed in a good SFE approximation.

Suppose $p_{Cap_{1}}$ is the price where $s_{1}$ reaches its capacity
$Cap_{1}$, and suppose that firms $1\dots n$, $n>2$, are producing
at $p_{Cap_{1}}^{-}$ and are not bound by their capacities. Then
due to continuity, the following must hold:
\begin{enumerate}
\item $s_{i}(p_{Cap_{1}})=s_{i}(p_{Cap_{1}}^{-})=(p_{Cap_{1}}-c_{i})(\sum_{j\ne i}s_{j}'(p_{Cap_{1}}^{-})-D'(p_{Cap_{1}})),\: i=1,\dots,n;$
\item $s_{i}(p_{Cap_{1}})=s_{i}(p_{Cap_{1}}^{+})=(p_{Cap_{1}}-c_{i})(\sum_{j\ne1,i}s_{j}'(p_{Cap_{1}}^{+})-D'(p_{Cap_{1}})),\: i\ne1.$
\end{enumerate}
And together they imply 
\[
s_{1}'(p_{Cap_{1}}^{-})=\sum_{i\ne1,2}\left(s_{i}'(p_{Cap_{1}}^{+})-s_{i}'(p_{Cap_{1}}^{-})\right)=\cdots=\sum_{i\ne1,n}\left(s_{i}'(p_{Cap_{1}}^{+})-s_{i}'(p_{Cap_{1}}^{-})\right),
\]
which further implies 
\[
s_{2}'(p_{Cap_{1}}^{+})-s_{2}'(p_{Cap_{1}}^{-})=\cdots=s_{n}'(p_{Cap_{1}}^{+})-s_{n}'(p_{Cap_{1}}^{-})=\frac{s_{1}'(p_{Cap_{1}}^{-})}{n-2}.
\]
It means that the rest of the curves $s_{i}$, $i\ne1$ are not differentiable
at $p_{Cap_{1}}$, and the right limits of their derivatives minus
the left limits are all equal. Graphically, in an SFE, we expect to
see all these curves have a jump in their derivatives at this price
by the same amount.

At the lower price level, where firms begin to produce, similar things
happen, but only that it is now a decrease in the derivatives: At
$c_{1}$, firm 1 begins to produce. And suppose that firms $1\dots n$,
$n>2$, are producing at $c_{1}^{+}$and are not bound by their capacities.
Then we must have:
\begin{enumerate}
\item $s_{i}(c_{1})=s_{i}(c_{1}^{+})=(c_{1}-c_{i})(\sum_{j\ne i}s_{j}'(c_{1}^{+})-D'(c_{1}^{+})),\: i=1,\dots,n;$
\item $s_{i}(c_{1})=s_{i}(c_{1}^{-})=(c_{1}-c_{i})(\sum_{j\ne1,i}s_{j}'(c_{1}^{-})-D'(c_{1}^{-})),\: i\ne1.$
\end{enumerate}
Together they imply 
\[
-s_{1}'(c_{1}^{+})=\sum_{i\ne1,2}\left(s_{i}'(c_{1}^{+})-s_{i}'(c_{1}^{-})\right)=\cdots=\sum_{i\ne1,n}\left(s_{i}'(c_{1}^{+})-s_{i}'(c_{1}^{-})\right),
\]
which further implies 
\[
s_{2}'(c_{1}^{+})-s_{2}'(c_{1}^{-})=\cdots=s_{n}'(c_{1}^{+})-s_{n}'(c_{1}^{-})=-\frac{s_{1}'(c_{1}^{+})}{n-2}.
\]
In a graph of the SFE, the already producing firms will have a drop
in their derivatives by the same amount, whenever there is a new firm
begins production.

\subsection{A General Method}

In this subsection we develop a general method that works for markets
with arbitrary number of players, and the cost functions are no longer
assumed to be linear. Of course this general method can work with
duopolies with linear cost functions, but still the method introduced
in Sections \ref{sec:SFE-without-cap} and \ref{sec:twoGenCap} are
recommended, as least squares problems are extremely easy to solve.

In Anderson and Hu \cite{anderson2008finding} the authers show how
to use piecewise linear functions to approximate the supply functions
in an equilibrium. They list the necessary conditions that the supply
functions of an SFE must satisfy, and try to find a set of piecewise
linear functions that satisfy these conditions at selected prices.
To do so, they form an auxiliary optimization problem with the necessary
conditions as constraints, and solve for a feasible solution. However,
as they report in the paper, a feasible solution is not easy to find.
They need to relax the equality and inequality constraints to the
error being less than or equal to a bound, and let the bound shrink
to zero with iteration. In addition, this method requires user intervention:
sophisticated artificial constraints need to be added to help the
solver find a feasible solution, and according to the authors, some
solvers were sensitive to the objective function, i.e., under the
same constraints, the solver may deem a problem infeasible with one
objective function, but could find the optimal solution when given
an another objective function. So when the problem doesn't solve,
the user doesn't know whether it is because the SFE doesn't exist
or it is because he/she is not using the right objective function.

Here we base on the same idea and improve by simplifying their method
with the use of splines. In fact, their piecewise linear functions
could be seen as splines with free knots (the knots were decision
variables in their model), but with formal use of splines we can greatly
reduce the number of variables and constraints of the problem, which
in principle makes it easier to find a feasible solution, and faster
to find the optimal solution.

If $\{s_{i}\}_{i=1}^{m}$ form an SFE, then for any firm $i$, and
for any demand shock $\varepsilon$, the corresponding market clearing
price $p$ must solve the optimization problem 
\begin{align*}
\max_{p}\quad & Profit(p)=[D(p)+\varepsilon-\sum_{j\ne i}s_{j}(p)]p-C_{i}(D(p)+\varepsilon-\sum_{j\ne i}s_{j}(p))\\
\text{s.t.}\quad & 0\le D(p)+\varepsilon-\sum_{j\ne i}s_{j}(p)\le Cap_{i}.
\end{align*}
If $\{s_{i}\}_{i=1}^{m}$ are differentiable at the optimal price
$p$, then they must satisfy the following Karush-Kuhn-Tucker (KKT)
conditions:
\begin{align}
 & s_{i}(p)+(p-C_{i}'(s_{i}(p))-\lambda_{i}+\mu_{i})\cdot(D'(p)-{\textstyle \sum}_{j\ne i}s_{j}'(p))=0,\nonumber \\
 & {\textstyle \sum_{j}}s_{j}(p)=D(p)+\varepsilon,\nonumber \\
 & s_{i}(p)\le Cap_{i},\nonumber \\
 & s_{i}(p)\ge0,\nonumber \\
 & \lambda_{i}(Cap_{i}-s_{i}(p))=0,\nonumber \\
 & \mu_{i}s_{i}(p)=0,\nonumber \\
 & \lambda_{i},\mu_{i}\ge0,\label{eq:OriginalNC}
\end{align}
where $\lambda_{i}$ and $\mu_{i}$ are Lagrangian multipliers corresponding
to the capacity and non-negativity constraints, respectively. Replace
$\{s_{i}\}_{i=1}^{m}$ with their spline approximations $\{\hat{s}_{i}\}_{i=1}^{m}$
in (\ref{eq:OriginalNC}), and assemble (\ref{eq:OriginalNC}) for
all firms and a set of demand realizations $\varepsilon_{k}$, $k=1,\dots,N$.
Assume that $\{s_{i}\}_{i=1}^{m}$ are differentiable at the optimal
prices $p_{k}=p(\varepsilon_{k})$, then the KKT conditions become:
\begin{align}
 & \hat{s}_{i}(p_{k})+(p_{k}-C_{i}'(\hat{s}_{i}(p_{k}))-\lambda_{ik}+\mu_{ik})(D'(p_{k})-{\textstyle \sum_{j\ne i}}\hat{s}_{j}'(p_{k}))=0,\,\text{ for all }i,k,\nonumber \\
 & {\textstyle \sum_{j}}\hat{s}_{i}(p_{k})=D(p_{k})+\varepsilon_{k},\text{ for all }k,\nonumber \\
 & \hat{s}_{i}(p_{max})\le Cap_{i},\text{ for all }i,\nonumber \\
 & \hat{s}_{i}(p_{min})\ge0,\text{ for all }i,\nonumber \\
 & \lambda_{ik}(Cap_{i}-\hat{s}_{i}(p_{k}))=0,\text{ for all }i,k,\nonumber \\
 & \mu_{ik}\hat{s}_{i}(p_{k})=0,\text{ for all }i,k,\nonumber \\
 & \lambda_{ik},\mu_{ik}\ge0,\text{ for all }i,k,\label{eq:NCAssemble}
\end{align}
where $\hat{s}_{i}(p_{k})$ is expressed as $\hat{s}_{i}(p_{k})=B^{T}(p_{k})\beta_{i}$
in computation.

Under what conditions do $\beta_{i}$ , $\lambda_{ik}$ and $\mu_{ik}$
exist that satisfy (\ref{eq:NCAssemble})? The answer depends on what
splines we use and how we select knots. For example, if we use splines
of order 4, and if we set one knot at each price level, i.e., $\tau_{k}=p_{k}$,
then we are sure there exist $\beta_{i}$ , $\lambda_{ik}$ and $\mu_{ik}$
that satisfy (\ref{eq:NCAssemble}), given the SFE itself exists.
In fact, conditions (\ref{eq:OriginalNC}) are all about $\{s_{i}\}_{i=1}^{m}$
and their first order derivatives. If we have $B^{T}(p_{k})\beta_{i}=s_{i}(p_{k})$
and $B'^{T}(p_{k})\beta_{i}=s_{i}'(p_{k})$ for all $i$ and $k$,
then the $\beta_{i}$ and the original $\lambda_{ik}$ and $\mu_{ik}$
will automatically satisfy (\ref{eq:NCAssemble}). This is not difficult.
Since $\tau_{k}=p_{k}$, for any $i$, from $p_{k}$ to $p_{k+1}$,
$B^{T}(p)\beta_{i}$ is a single piece cubic polynomial. And $B^{T}(p_{k})\beta_{i}=s_{i}(p_{k})$,
$B^{T}(p_{k+1})\beta_{i}=s_{i}(p_{k+1})$, $B'^{T}(p_{k})\beta_{i}=s_{i}'(p_{k})$
and $B'^{T}(p_{k+1})\beta_{i}=s_{i}'(p_{k+1})$ place 4 constraints
that will determine the polynomial. A piecewise cubic Hermite interpolation
is a spline that satisfies these constraints. (See de Boor \cite{de2001practical}.)

More generally, if we are using B-splines, for example, then a $K$-knot
B-spline of order $M$ has $K+M$ coefficients. If we consider a price
range where $0<s_{i}<Cap_{i}$ for all $i$, thus all supply functions
are smooth and all $\lambda_{ik}$ and $\mu_{ik}$ are 0, then there
is only one equality constraint per firm per price level, i.e., the
first constraint in (\ref{eq:NCAssemble}). Therefore, if we place
only one price level between every two adjacent knots, then there
exist $\{\beta_{i}\}_{i=1}^{m}$ that satisfy (\ref{eq:NCAssemble}).
However, if we have more than one price level between some adjacent
knots, then a solution that satisfies (\ref{eq:NCAssemble}) may not
exist.

As in Anderson and Hu \cite{anderson2008finding}, we use an auxiliary
optimization problem to find a feasible solution. The problem here
is that $B^{T}(p)$ has no simple analytical expression, thus it can
hardly be evaluated by a solver. Fortunately, since $\{s_{i}\}$ are
optimal for all the values of $\varepsilon$, $\{\varepsilon_{k}\}$
do not have to be chosen to reflect the distribution of $\varepsilon$.
Hence, instead of selecting $\{\varepsilon_{k}\}$ and optimizing
$\{p_{k}\}$, we can fix $\{p_{k}\}$, and let $\{\varepsilon_{k}\}$
be the decision variables. Further, examining (\ref{eq:NCAssemble})
closely, one would find that $\{\varepsilon_{k}\}$ do not have to
appear as decision variables at all: they are simply determined by
$\varepsilon_{k}=B^{T}(p_{k})\sum_{j}\beta_{j}-D(p_{k})$. If there
is an $\varepsilon_{k}$ larger (smaller) than the upper (lower) bound
of the support of $\varepsilon$, it means that we have chosen a $p_{k}$
too large (too small) that is not needed for the supply functions.

In addition, feasible supply functions must be non-decreasing. Thus
we place the monotonicity constraint $\hat{s}_{i}(p_{k})\le\hat{s}_{i}(p_{k+1})$,
for all $i$ and $k$. However, with this new constraint, we are no
longer guaranteed to find $\beta_{i}$ , $\lambda_{ik}$ and $\mu_{ik}$
that satisfy (\ref{eq:NCAssemble}), i.e., there may not be a feasible
solution in the spline space, which means that we need to do relaxations.
We replace the ``$=0$'' constraints in \eqref{eq:NCAssemble} with
their absolute values less than or equal to $\rho$, where $\rho\ge0$.

The objective is simply ``minimize $\rho$'', thus there is no need
to use iteration as in \cite{anderson2008finding}.

The complete formulation is now as follows:
\begin{align}
\min_{\rho,\beta,\lambda,\mu}\quad & \rho\nonumber \\
\text{s.t.}\quad & {\textstyle \left|\hat{s}_{i}(p_{k})+(p_{k}-C_{i}'(\hat{s}_{i}(p_{k}))-\lambda_{ik}+\mu_{ik})(D'(p_{k})-\sum_{j\ne i}\hat{s}_{j}'(p_{k}))\right|}\le\rho,\text{ for all }i,k,\nonumber \\
 & \hat{s}_{i}(p_{k})\le\hat{s}_{i}(p_{k+1}),\text{ for all }i,k,\nonumber \\
 & \hat{s}_{i}(p_{max})\le Cap_{i},\text{ for all }i,\nonumber \\
 & \hat{s}_{i}(p_{min})\ge0,\text{ for all }i,\nonumber \\
 & \lambda_{ik}(Cap_{i}-\hat{s}_{i}(p_{k}))\le\rho,\text{ for all }i,k,\nonumber \\
 & \mu_{ik}\hat{s}_{i}(p_{k})\le\rho,\text{ for all }i,k,\nonumber \\
 & \lambda_{ik},\mu_{ik}\ge0,\text{ for all }i,k,\label{eq:AuxProb}
\end{align}
where $\hat{s}_{i}(p_{k})=B^{T}(p_{k})\beta_{i}$.

Ideally, we would hope that the optimal value of $\rho$ to be 0.
But in reality, it is often the case that there is not a solution
in the spline space that exactly fits all the conditions in (\ref{eq:NCAssemble}),
thus the optimal $\rho$ will be positive. However, due to the flexibility
of splines, there are functions in the spline space that ``almost''
fit (\ref{eq:NCAssemble}), i.e., the optimal $\rho$ will be small
(See examples in Section \ref{sec:NumericalExamples}).

We now show an asymptotic property of the approximation. Let the knots
be $p_{min}=\tau_{0}<\tau_{1}<\cdots<\tau_{N}=p_{max}$, and for simplicity,
let the controlled prices be $p_{k}=\frac{1}{2}(\tau_{k-1}+\tau_{k})$,
$k=1,\dots,N$. We will only show and prove the property for quadratic
splines, but it can be proved very similarly for splines of higher
orders.
\begin{thm}
\label{Thm:asympotic}Assume that $\{s_{i}\}_{i=1}^{m}$ form an equilibrium
and are differentiable at $\{\tau_{k}\}_{k=0}^{N}$ and at $\{p_{k}\}_{k=1}^{N}$
that are defined as above.%
\footnote{If not differentiable, choose slightly different $\{\tau_{k}\}_{k=0}^{N}$
and $\{p_{k}\}_{k=1}^{N}$. Remember that every $s_{i}(p)$ is continuously
differentiable only except at $p\in\left\{ C_{1}'(0),\dots,C_{m}'(0)\right\} \cup\left\{ p_{Cap_{1}},\dots,p_{Cap_{m}}\right\} $.%
} Let $\{\hat{s}_{i}\}_{i=1}^{m}$ be the optimal solution to (\ref{eq:AuxProb}),
among quadratic splines, where the knots are $\{\tau_{k}\}_{k=0}^{N}$
and prices are $\{p_{k}\}_{k=1}^{N}$, then every price $p\in[p_{min},p_{max}]$
will eventually satisfy the KKT conditions (\ref{eq:OriginalNC}),
as $\left|\tau\right|\rightarrow0$.\end{thm}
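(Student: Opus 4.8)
The plan is to establish a pointwise convergence result by combining the asymptotic fit of the spline approximation at the control prices $\{p_{k}\}$ with an interpolation-error argument, exactly in the spirit of the proof of Theorem \ref{thm:ODE_limit}. The quantity to control is the residual of the first KKT equation in \eqref{eq:OriginalNC} evaluated at an arbitrary $p\in[p_{min},p_{max}]$. First I would argue that the optimal value $\rho^{*}$ of \eqref{eq:AuxProb} tends to $0$ as $|\tau|\to 0$. This follows from feasibility: since the true equilibrium $\{s_{i}\}$ is differentiable at the knots and control prices, the quadratic-spline interpolant $\{I s_{i}\}$ together with the true multipliers $\{\lambda_{ik}\},\{\mu_{ik}\}$ furnishes a feasible point of \eqref{eq:AuxProb} whose associated $\rho$ is bounded by the interpolation errors $\|I s_{i}-s_{i}\|_{\infty}$ and $\|(I s_{i})'-s_{i}'\|_{\infty}$. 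By the de Boor-type bounds already invoked in the excerpt, these errors are $O\bigl(\omega(s_{i}',|\tau|/2)\bigr)$ and hence vanish. Because the optimal $\rho^{*}$ cannot exceed this feasible $\rho$, we get $\rho^{*}\to 0$.

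\textbf{From the control-price residual to an arbitrary price.}
Next I would promote the estimate from the discrete prices $\{p_{k}\}$ to an arbitrary $p$. Fix $p\in[p_{min},p_{max}]$ and let $p_{k}$ be the control price whose knot interval contains $p$, so $|p-p_{k}|\le|\tau|$. At $p_{k}$ the optimal solution satisfies the first relation of \eqref{eq:NCAssemble} up to $\rho^{*}$; I would then bound the difference between the residual at $p$ and at $p_{k}$ using continuity. The terms $\hat{s}_{i}(p)$, $\hat{s}_{i}'(p)$, $D'(p)$ and $C_{i}'(\cdot)$ are all uniformly continuous on the compact interval, and $\hat{s}_{i}$, $\hat{s}_{i}'$ converge uniformly to $s_{i}$, $s_{i}'$ by the same interpolation bounds, so the residual at $p$ differs from the residual at $p_{k}$ by a quantity that is $O(|\tau|)$ plus a modulus-of-continuity term. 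Combining with $\rho^{*}\to 0$ shows the stationarity residual converges to $0$ uniformly in $p$. The complementarity residuals $\lambda_{ik}(Cap_{i}-\hat{s}_{i}(p_{k}))$ and $\mu_{ik}\hat{s}_{i}(p_{k})$ are likewise bounded by $\rho^{*}$ and handled by the same uniform-continuity transfer, while the monotonicity and endpoint constraints pass to the limit directly.

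\textbf{Main obstacle.}
The hard part will be the multipliers $\{\lambda_{ik}\},\{\mu_{ik}\}$, which are decision variables and are not a priori bounded; without control on them one cannot pass to the limit in the stationarity equation or argue uniform continuity of the residual in $p$. I would address this by restricting attention to a price $p$ at which the true equilibrium has $0<s_{i}(p)<Cap_{i}$ for the relevant firm, so the active constraints are slack in a neighborhood; by uniform convergence $\hat{s}_{i}$ stays strictly between $0$ and $Cap_{i}$ there for $|\tau|$ small, forcing $\lambda_{ik}=\mu_{ik}=0$ in the limit via the complementarity residuals, and reducing \eqref{eq:OriginalNC} to the smooth first-order condition already treated. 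At the finitely many transition prices in $\left\{C_{1}'(0),\dots,C_{m}'(0)\right\}\cup\left\{p_{Cap_{1}},\dots,p_{Cap_{m}}\right\}$ the footnote's device of perturbing the knots and control prices slightly lets one avoid evaluating exactly at a nonsmooth point, so the set where the KKT conditions are verified is dense and, by continuity of the limiting residual, in fact all of $[p_{min},p_{max}]$. Establishing a uniform bound on the multipliers (so the limiting $\lambda,\mu$ exist) is the one technical point I would expect to require the most care.
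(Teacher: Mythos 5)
Your overall strategy is the same as the paper's: (i) show the optimal $\rho$ of \eqref{eq:AuxProb} vanishes by exhibiting a feasible point built from an interpolant of the true equilibrium together with the true multipliers, and (ii) transfer the residual bound from the control prices $\{p_{k}\}$ to arbitrary $p$ by uniform continuity. The genuine gap is in step (i). The interpolation bounds you invoke (sup-norm errors of $Is_{i}$ and $(Is_{i})'$ controlled by $\omega(s_{i}',|\tau|/2)$) require $s_{i}$ to be $C^{1}$ on the interval being interpolated, but an equilibrium supply function is only piecewise smooth: it can have kinks, and even jump discontinuities, at the transition prices in $\left\{ C_{1}'(0),\dots,C_{m}'(0)\right\} \cup\left\{ p_{Cap_{1}},\dots,p_{Cap_{m}}\right\}$ (cf.\ Section \ref{sub:Difficulty}). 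Perturbing the knots and control prices, as the footnote allows, only guarantees differentiability \emph{at} those finitely many evaluation points; it cannot prevent a nonsmooth or discontinuous point of $s_{i}$ from lying in the \emph{interior} of some knot interval, and on such an interval the quadratic interpolant of $s_{i}$ itself admits no useful error bound. The paper closes this with a device your proposal lacks: on each offending interval it replaces $s_{i}$ by a smooth non-decreasing surrogate $\tilde{s}_{i}$ that is required to agree with $s_{i}$ only at the single control price $p_{k}$ of that interval (and to coincide with $s_{i}$ on all non-offending intervals), interpolates $\tilde{s}_{i}$ instead, and then bounds $\vert s_{j}'(p_{k})-I_{3}\tilde{s}_{j}'(p_{k})\vert=O(|\tau|)$ at that control price by chaining derivative estimates through the adjacent interval, which is smooth once $|\tau|$ is small enough that no two adjacent intervals both contain nonsmooth points. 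Without this (or an equivalent) construction, your claim that the feasible $\rho$ is $O\bigl(\omega(s_{i}',|\tau|/2)\bigr)$ is unjustified near the nonsmooth points, which is exactly where the theorem has content.

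On step (ii), your concern about unbounded multipliers is legitimate, and it is worth noting the paper does not really resolve it either: it linearly interpolates the optimal $\lambda_{ik},\mu_{ik}$ and appeals to uniform continuity of the resulting residual functions, with a modulus that implicitly depends on $|\tau|$. Your proposed workaround (forcing $\lambda_{ik}=\mu_{ik}=0$ where the true equilibrium is strictly interior, then extending by density and continuity) is a defensible alternative, but as stated it verifies the stationarity condition only with zero multipliers on the slack region and leaves the capacity-binding prices, where the multipliers actually matter, unaddressed; that part would need to be completed before the argument is a proof.
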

\begin{proof}
We first show that the KKT conditions will eventually be satisfied
at all the controlled points, i.e., the optimal value $\rho\rightarrow0$,
as $\left|\tau\right|\rightarrow0$. Then we show that the KKT conditions
will be satisfied at any point $p\in[p_{min},p_{max}]$.

For every $i$, let $\tilde{s}_{i}$ be a smooth approximation of
$s_{i}$, such that if $[\tau_{k-1},\tau_{k}]$ does not contain a
nonsmooth point of $s_{i}$, then $\tilde{s}_{i}(p)=s_{i}(p)$, for
all $p\in[\tau_{k-1},\tau_{k}]$; otherwise, we only require non-decreasingness
and $\tilde{s}_{i}(p)=s_{i}(p)$ at $p_{k}$. Note that as $\left|\tau\right|\rightarrow0$,
we will not have two adjacent intervals that both contain nonsmooth
points, and each interval will contain at most one nonsmooth point.

Let $I_{3}\tilde{s}_{i}$ be the quadratic interpolation of $\tilde{s}_{i}$,
such that $I_{3}\tilde{s}_{i}(p_{k})=\tilde{s}_{i}(p_{k})=s_{i}(p_{k})$
for all $k=1,\dots,N$. Thus the monotonicity constraint, the capacity
constraint and the nonnegativity constraint are automatically satisfied.

Since $\tilde{s}_{i}$ is smooth, we can use the property proved in
Marsden \cite{marsden1974quadratic} that for all $k$, 
\[
\left|I_{3}\tilde{s}_{i}'(p_{k})-\tilde{s}_{i}'(p_{k})\right|\le3\sup\left\{ |\tilde{s}_{i}'(p)-\tilde{s}_{i}'(p')|\text{: such that }|p-p'|\le\frac{\left|\tau\right|}{2}\right\} =O(\left|\tau\right|).
\]

Let $\mu_{ik}$ and $\lambda_{ik}$ be the same as they are in (\ref{eq:OriginalNC}),
so that the complementarity constraints are satisfied, and only the
first constraint in (\ref{eq:AuxProb}) will affect the optimal value
$\rho$.

Consider $[\tau_{k-1},\tau_{k}]$ that does not contain a nonsmooth
point of $s_{i}$. We have $\tilde{s}_{i}(p)=s_{i}(p)$ for all $p\in[\tau_{k-1},\tau_{k}]$.
The error of the first order condition given by $I_{3}\tilde{s}_{i}$
will be: 
\begin{align*}
 & \left|I_{3}\tilde{s}_{i}(p_{k})+(p_{k}-C_{i}'(I_{3}\tilde{s}_{i}(p_{k}))-\lambda_{ik}+\mu_{ik})(D'(p_{k})-{\textstyle \sum}_{j\ne i}I_{3}\tilde{s}_{j}'(p_{k}))\right|\\
= & \left|s_{i}(p_{k})+(p_{k}-C_{i}'(s_{i}(p_{k})))-\lambda_{ik}+\mu_{ik})(D'(p_{k})-{\textstyle \sum_{j\ne i}}I_{3}\tilde{s}_{j}'(p_{k}))\right|\\
= & \left|s_{i}(p_{k})+(p_{k}-C_{i}'(s_{i}(p_{k})))-\lambda_{ik}+\mu_{ik})(D'(p_{k})-{\textstyle \sum_{j\ne i}}s_{j}'(p_{k})+{\textstyle \sum_{j\ne i}}(s_{j}'(p_{k})-I_{3}\tilde{s}_{j}'(p_{k})))\right|\\
= & \left|(p_{k}-C_{i}'(s_{i}(p_{k})))-\lambda_{ik}+\mu_{ik})({\textstyle \sum_{j\ne i}}(s_{j}'(p_{k})-I_{3}\tilde{s}_{j}'(p_{k})))\right|\\
\le & (p_{k}-C_{i}'(s_{i}(p_{k})))-\lambda_{ik}+\mu_{ik})({\textstyle \sum_{j\ne i}}\left|s_{j}'(p_{k})-I_{3}\tilde{s}_{j}'(p_{k})\right|)\\
= & (p_{k}-C_{i}'(s_{i}(p_{k})))-\lambda_{ik}+\mu_{ik})({\textstyle \sum_{j\ne i}}\left|\tilde{s}_{j}'(p_{k})-I_{3}\tilde{s}_{j}'(p_{k})\right|)\\
\le & (p_{k}-C_{i}'(s_{i}(p_{k})))-\lambda_{ik}+\mu_{ik})({\textstyle \sum_{j\ne i}}O(\left|\tau\right|))\\
= & O(\left|\tau\right|),
\end{align*}
where in the first equality we replaced $I_{3}\tilde{s}_{i}(p_{k})$
with $s_{i}(p_{k})$ as they are equal, in the second equality we
added and subtracted ${\textstyle \sum_{j\ne i}}s_{j}'(p_{k})$, in
the third equality we removed $s_{i}(p)+(p-C_{i}'(s_{i}(p))-\lambda_{i}+\mu_{i})\cdot(D'(p)-\sum_{j\ne i}s_{j}'(p))$
as it equals 0, and in the fourth equality we replaced $s'_{j}(p_{k})$
with $\tilde{s}_{j}'(p_{k})$, because by construction $\tilde{s}_{j}=s_{j}$
on$[\tau_{k-1},\tau_{k}]$.

Now consider $[\tau_{k-1},\tau_{k}]$ that does contain a nonsmooth
point $p^{*}$ of $s_{i}$. There are 2 cases: $p^{*}<p_{k}$ and
$p^{*}>p_{k}$ (We assumed $p_{k}$ is a differentiable point).

For the case $p^{*}<p_{k}$, in the next interval $[\tau_{k},\tau_{k+1}]$,
for all $i=1,\dots,m$, (a) $s_{i}$ is smooth, (b) $\tilde{s}_{i}=s_{i}$,
and (c) $I_{3}\tilde{s}_{i}$ is a quadratic interpolation of $\tilde{s}_{i}$.
So we have (d) $s_{i}'(p_{k})=s_{i}'(\tau_{k})+O(\left|\tau\right|)$
(by a), (e) $I_{3}\tilde{s}_{i}'(\tau_{k})=\tilde{s}_{i}'(\tau_{k})+O(\left|\tau\right|)=s_{i}'(\tau_{k})+O(\left|\tau\right|)$
(by b and c), and (f) $I_{3}\tilde{s}_{i}'(p_{k})=I_{3}\tilde{s}_{i}'(\tau_{k})+O(\left|\tau\right|)$
(by c). Thus by d, e and f, $\left|s_{i}'(p_{k})-I_{3}\tilde{s}_{i}'(p_{k})\right|=O(\tau)$,
for all $i$.

Similarly, in the case $p^{*}>p_{k}$, we look at the previous interval.
So for all $i$, $s_{i}'(p_{k})=s_{i}'(\tau_{k-1})+O(\left|\tau\right|)$,
$I_{3}\tilde{s}_{i}'(\tau_{k-1})=\tilde{s}_{i}'(\tau_{k-1})+O(\left|\tau\right|)=s_{i}'(\tau_{k-1})+O(\left|\tau\right|)$,
and $I_{3}\tilde{s}_{i}'(p_{k})=I_{3}\tilde{s}_{i}'(\tau_{k-1})+O(\left|\tau\right|)$,
thus $\left|s_{i}'(p_{k})-I_{3}\tilde{s}_{i}'(p_{k})\right|=O(\tau)$.

Therefore, the error of the first order condition given by $I_{3}\tilde{s}_{i}$
will be: 
\begin{align*}
 & \left|I_{3}\tilde{s}_{i}(p_{k})+(p_{k}-C_{i}'(\hat{s}_{i}(p_{k}))-\lambda_{ik}+\mu_{ik})(D'(p_{k})-{\textstyle \sum_{j\ne i}}I_{3}\tilde{s}_{j}'(p_{k}))\right|\\
\le & (p_{k}-C_{i}'(s_{i}(p_{k})))-\lambda_{ik}+\mu_{ik})({\textstyle \sum_{j\ne i}}\left|s_{j}'(p_{k})-I_{3}\tilde{s}_{j}'(p_{k})\right|)\\
\le & (p_{k}-C_{i}'(s_{i}(p_{k})))-\lambda_{ik}+\mu_{ik})({\textstyle \sum_{j\ne i}}O(\left|\tau\right|))\\
= & O(\left|\tau\right|).
\end{align*}

Thus we can conclude that the best $\rho$ with $\{I_{3}\tilde{s}_{i}\}_{i=1}^{m}$
is $O(\left|\tau\right|)$. And since $\{I_{3}\tilde{s}_{i}\}_{i=1}^{m}$
is just one of the feasible approximations, the optimal value $\rho$
for (\ref{eq:AuxProb}) must be at least as good as $O(\left|\tau\right|)$.
Therefore $\rho\rightarrow0$, as $\left|\tau\right|\rightarrow0$.

The uniform convergence follows naturally due to uniform continuity:
Let $\hat{\lambda}_{i}(p)$ and $\hat{\mu}_{i}(p)$, $i=1,\dots,m$,
be linear interpolations of $\{\lambda_{ik}\}$ and $\{\mu_{ik}\}$
(so they are non-negative), i.e.,
\[
\hat{\lambda}_{i}(p)=\begin{cases}
\lambda_{ik}, & p=p_{k};\\
\frac{p_{k}-p}{p_{k}-p_{k-1}}\lambda_{i,k-1}+\frac{p-p_{k-1}}{p_{k}-p_{k-1}}\lambda_{ik}, & p_{k-1}<p<p_{k},
\end{cases}
\]
and
\[
\hat{\mu}_{i}(p)=\begin{cases}
\mu_{ik}, & p=p_{k};\\
\frac{p_{k}-p}{p_{k}-p_{k-1}}\mu_{i,k-1}+\frac{p-p_{k-1}}{p_{k}-p_{k-1}}\mu_{ik}, & p_{k-1}<p<p_{k}.
\end{cases}
\]
So the approximated supply functions $\{\hat{s}_{i}(p)\}$, and the
Lagrangians $\{\hat{\lambda}_{i}(p)\}$ and $\{\hat{\mu}_{i}(p)\}$
are uniformly continuous on $[p_{min},p_{max}]$. Also, the error
functions of the first order conditions 
\[
FOC_{i}(p)=\hat{s}_{i}(p)+(p-C_{i}'(\hat{s}_{i}(p))-\hat{\lambda}_{i}(p)+\hat{\mu}_{i}(p))(D'(p)-\sum_{j\ne i}\hat{s}_{j}'(p)),\; i=1,\dots,m
\]
and the error functions of the complementarity conditions
\[
LAM_{i}(p)=\hat{\lambda}_{i}(p)(Cap_{i}-\hat{s}_{i}(p)),\; i=1,\dots,m
\]
and 
\[
MU_{i}(p)=\hat{\mu}_{i}(p)\hat{s}_{i}(p),\; i=1,\dots,m
\]
are uniform continuous.

For any $p\in[p_{min},p_{max}]$, let $p_{\bar{k}}$ be the nearest
point to $p$ among $\{p_{k}\}$. Hence, for any $\epsilon>0$, as
proved above, there exists $\delta_{1}>0$, such that when $\left|\tau\right|<\delta_{1}$,
for all $i$, we have $\left|FOC_{i}(p_{\bar{k}})\right|<\frac{\epsilon}{2}$,
$\left|LAM_{i}(p_{\bar{k}})\right|<\frac{\epsilon}{2}$ and $\left|MU_{i}(p_{\bar{k}})\right|<\frac{\epsilon}{2}$.
Convergence at all $\{p_{k}\}$ are dominated by $\rho$, so $\delta_{1}$
is independent of $p_{\bar{k}}$. By construction, $0\le\hat{s}_{i}(p_{\bar{k}})\le Cap_{i}$.
Also, there exists $\delta_{2}>0$, such that when $\left|\bar{p}-\tilde{p}\right|<\delta_{2}$,
due to uniform continuity, we have $\left|FOC_{i}(\bar{p})-FOC_{i}(\tilde{p})\right|<\frac{\epsilon}{2}$,
$\left|LAM_{i}(\bar{p})-LAM_{i}(\tilde{p})\right|<\frac{\epsilon}{2}$,
$\left|MU_{i}(\bar{p})-MU_{i}(\tilde{p})\right|<\frac{\epsilon}{2}$
and $\left|\hat{s}_{i}(\bar{p})-\hat{s}_{i}(\tilde{p})\right|<\epsilon$.
Thus, for $\left|\tau\right|<\min(\delta_{1},\delta_{2})$, we must
have $\left|FOC_{i}(p)\right|<\epsilon$, $\left|LAM_{i}(p)\right|<\epsilon$,
$\left|MU_{i}(p)\right|<\epsilon$ and $-\epsilon\le\hat{s}_{i}(p_{\bar{k}})\le Cap_{i}+\epsilon$.
Therefore, the KKT conditions will eventually be satisfied uniformly
at all $p\in[p_{min},p_{max}]$, as $\left|\tau\right|\rightarrow0$.
\end{proof}
The first constraint in (\ref{eq:AuxProb}) from the KKT conditions
is just what the ODE system (\ref{eq:firstOrder}) says, thus Proposition
3 in \cite{holmberg2008supply} and Theorem \ref{Thm:asympotic} together
guarantee that in the limit situation the solution of (\ref{eq:AuxProb})
is a supply function equilibrium (when it exists).

In case the solution gets stuck at a local minimum, one can try replacing
the constraint $\hat{s}_{i}(p_{k})\le\hat{s}_{i}(p_{k+1})$ with $b_{i,t}\le b_{i,t+1}$,
if one uses B-splines. The constraint $b_{i,t}\le b_{i,t+1}$ is a
sufficient condition for non-decreasingness for B-splines, so using
it instead of the necessary condition $\hat{s}_{i}(p_{k})\le\hat{s}_{i}(p_{k+1})$
will reduce the space of feasible solutions, thus making the solution
less likely to fall into local minima. Also, one does not want to
over reduce the space, so quadratic splines are recommended, because
$b_{i,t}\le b_{i,t+1}$ is a necessary and sufficient condition for
quadratic splines to be non-decreasing. In addition, our experience
shows that although the optimal $\rho$ given by the pointwise monotonicity
constraint $\hat{s}_{i}(p_{k})\le\hat{s}_{i}(p_{k+1})$ is usually
smaller than that from the full monotonicity constraint $b_{i,t}\le b_{i,t+1}$,
the solution from the latter formulation is usually more robust than
the former (See Example \ref{exa:compare}). Thus it is always good
to consider using the full monotonicity constraint, even when local
minimum is not present.

We see that compared to the formulation given in Anderson and Hu (see
\cite{anderson2008finding} (14), (17) and (18)), formulation (\ref{eq:AuxProb})
has significantly less variables and constraints, so in principle
it is easier to find a feasible solution and faster to solve. It is
also user friendlier, as it does not require the user to adjust the
objective function and constraints when solving a problem. Since there
is no description in \cite{anderson2008finding} about in which cases
it is hard to find feasible solutions, we are unable to make a comparison,
but we have not experienced any difficulty in the many problems we
tested.

When solving the problem we selected finite $\{p_{k}\}$. If the firms
were able to choose $p$ continuously, they may be able to improve
the profit slightly. The improvement of firm $i$'s profit at $p_{k}$
can be approximated by $\left|Profit'(p_{k})\cdot(p-p_{k})\right|+O(\left|\tau\right|^{2})$.
If $\lambda_{ik}=0$ and $\mu_{ik}=0$, then $\left|Profit'(p_{k})\cdot(p-p_{k})\right|<\rho\left|\tau\right|=O(\left|\tau\right|^{2})$.
So the improvement shrinks to 0 as $\left|\tau\right|\rightarrow0$.

\section{Numerical Examples\label{sec:NumericalExamples}}

The following are a few examples demonstrating the use of the numerical
methods we introduced above. Without loss of generality, constant
terms of all the cost functions are set to 0, as they do not affect
the results. When solving problems with the general method, both IPOPT
\cite{wachter2006implementation} and CONOPT \cite{drud1994conopt}
are good choices for the solver. With their default settings, CONOPT
tends to give a slightly better solution in terms of optimality and
feasibility, while IPOPT is much faster.
\begin{example}
\label{exa:lse}In this example we use the least squares method to
find the equilibrium in a duopoly market. The two firms have linear
cost functions: $C_{1}(q)=10q$, $C_{2}(q)=15q$. Their capacities
are $Cap_{1}=80$ and $Cap_{2}=75$, respectively. The demand function
is $D(p)=-3p+\varepsilon$. 

We use natural cubic splines in the example, while B-splines work
fine, too. The knots are from 5 to 77 at step 9. The price levels
used for fitting the first order condition (\ref{eq:firstOrder})
are from 16 to 65 at step 0.5. As described by Theorem \ref{P: singularity},
the $\mathbb{B}$ matrix has 18 columns but the rank is 17, giving
us one degree of freedom, which covers all the potential solutions
when $\varepsilon_{min}$ is low enough. Figure \ref{Flo:ex1_1} plots
the solutions to (\ref{eq:firstOrder}) with different values of $t$.

From Figure \ref{Flo:ex1_1} it is easy to tell that Firm 1 will reach
the capacity first. A linear search gives that the maximum of $\hat{s}_{1}$
will equal to $Cap_{1}$ (at $p_{Cap_{1}}=31.65$) when $t=194.06$.
Therefore, the obtained splines with $t=194.06$ gives an approximation
of the SFE for $15<p<p_{Cap_{1}}$. When $p\ge p_{Cap_{1}}$, $s_{1}(p)=80$
and $s_{2}(p)=3(p-15)$ until $s_{2}$ reaches $Cap_{2}$ at $p=40$.
When $10\le p\le15$, $s_{1}(p)=3(p-10)$ and $s_{2}(p)=0$. Figure
\ref{Flo:ex1_2} shows the approximated SFE for $10\le p\le45$. 
\end{example}
\begin{figure}
\begin{centering}
\subfloat[Solutions with different values of $t$]{\includegraphics[width=0.4\textwidth]{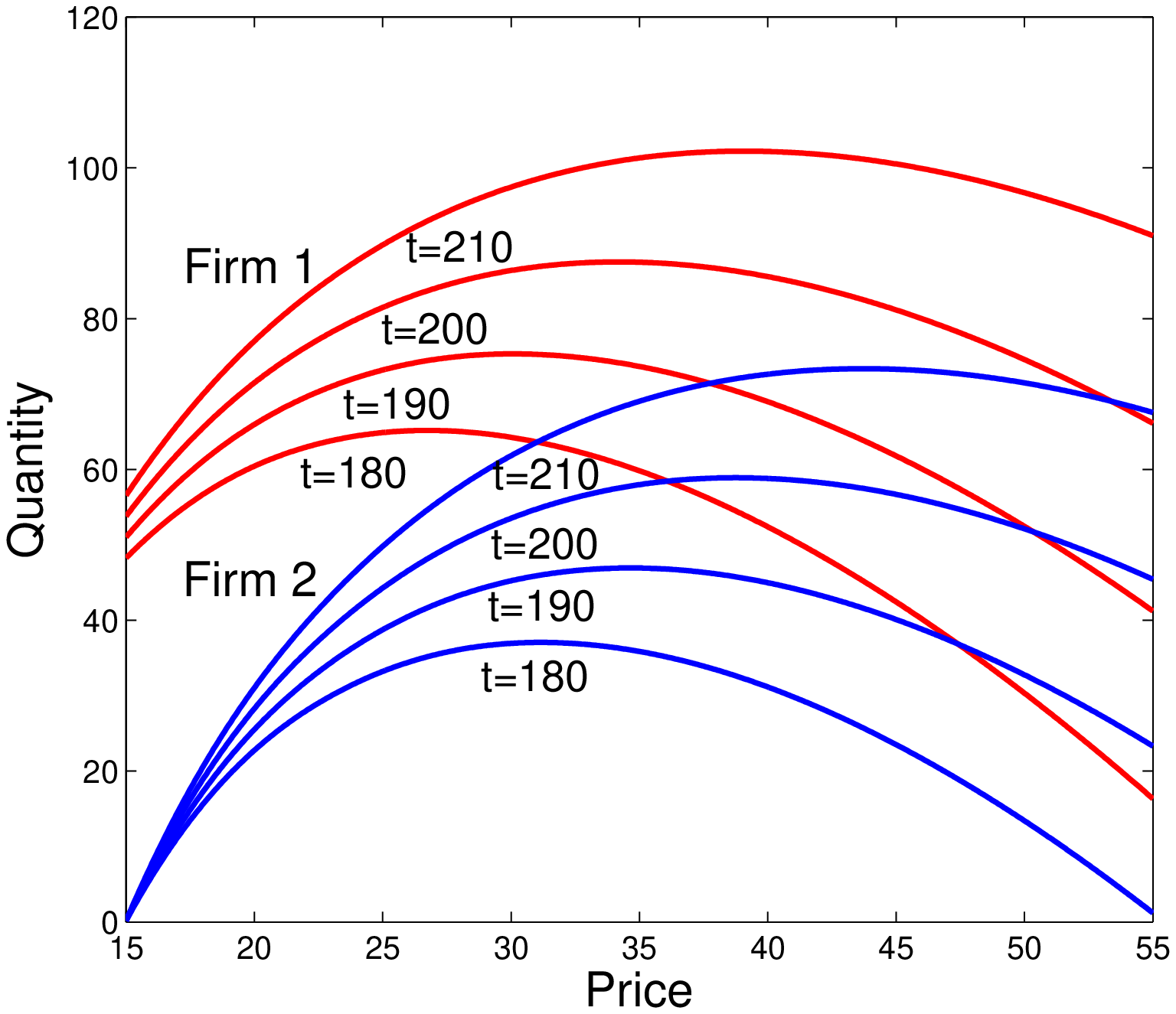}

\label{Flo:ex1_1}}\subfloat[Approximated SFE]{\includegraphics[width=0.4\textwidth]{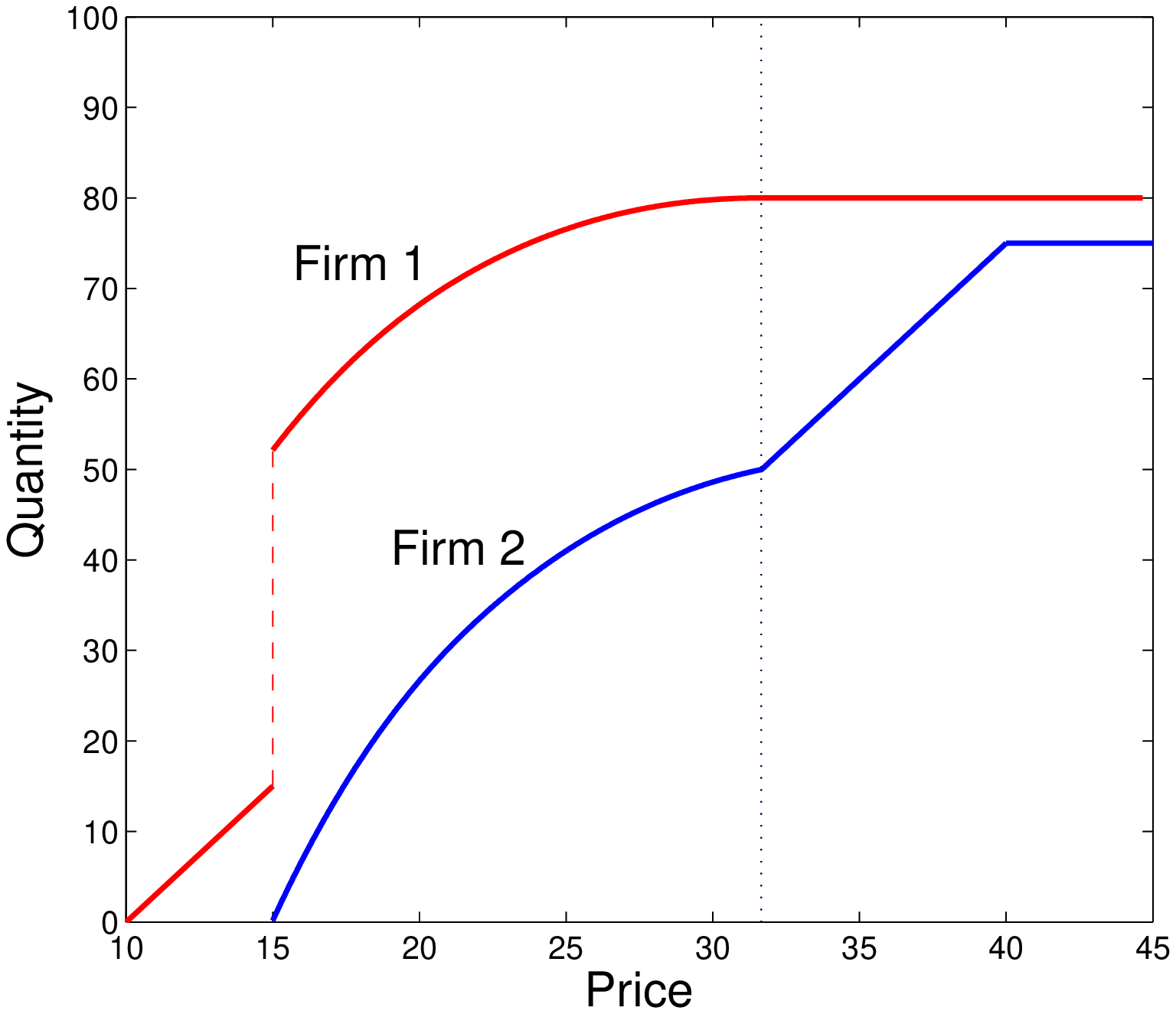}

\label{Flo:ex1_2}}
\par\end{centering}

\caption{Least squares approximation}
\end{figure}

\begin{example}
This time we find the SFE in Example \ref{exa:lse} with the general
method. All the splines we use with the general method are B-splines.
For this example, the knots are from 5 to 48 at step 0.05, and we
put one price level at the center of each knot interval.

We solved (\ref{eq:AuxProb}) with IPOPT using the full monotonicity
constraint, and obtained the optimal value $\rho=0.0048$, which is
sufficiently close to 0. Figure \ref{Flo:ex2} shows the spline approximation
of the SFE. We see that when the mesh is fine enough, splines are
quite capable at handling nonsmoothness and even discontinuities of
the functions.

Comparing Figure \ref{Flo:ex2} with Figure \ref{Flo:ex1_2}, we see
that both methods are able to find the equilibrium for markets of
asymmetric duopoly with constant marginal costs, and the solutions
are both of high precision. However, no matter in terms of the computational
time, or in terms of the tools, solving a least squares problem is
far easier than solving a highly nonlinear large scale optimization
problem. Thus for this type of problems, the specialized least squares
method is certainly more preferable.
\end{example}
\begin{figure}
\centering{}\includegraphics[width=0.4\textwidth]{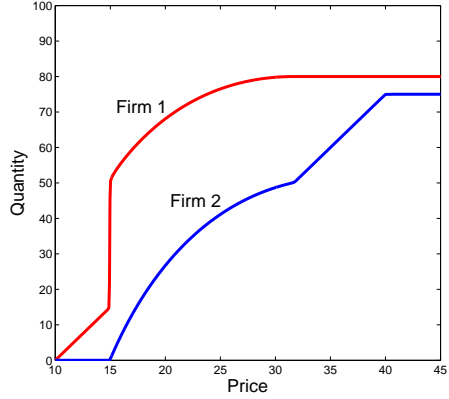}\caption{General method approximation}
\label{Flo:ex2}
\end{figure}

\begin{example}
\label{exa:compare}In this example we compare the effects of the
two monotonicity constraints, and the results with different mesh
sizes. The example is taken from Anderson and Hu \cite{anderson2008finding},
which has three firms with cost functions $C_{1}(q)=5q+0.8q^{2}$,
$C_{2}(q)=8q+1.2q^{2}$ and $C_{3}(q)=12q+2.3q^{2}$, and capacities
$Cap_{1}=11$, $Cap_{2}=8$ and $Cap_{3}=55$, respectively. The demand
function is $D(p)=-0.5p+\varepsilon$.

First we compare the pointwise monotonicity with the full monotonicity
constraints. The knots we use are from 5 to 54 at step 0.5, and we
put a price level at the center of each knot interval. We solve the
problem with CONOPT: The optimal value of $\rho$ is $1.6\times10^{-10}$
if we use the pointwise constraint, and it is $0.002$ if we use the
full constraint. Although the pointwise constraint gives a smaller
$\rho$, it does not necessarily mean that it is the better choice.
Figure \ref{Flo:ex3_1} is a comparison of the results at the low
price level, where images are magnified. Theoretically, we know that
when $5\le p<8$, Firm 1 is the monopoly, thus $s_{1}(p)=\frac{5}{18}(p-5)$,
and we also know that $s_{1}$ should have a jump at $p=8$. We see
that the solution given by the full constraint (\ref{Flo:Ex_3_1_b})
is closer to the true $s_{1}$ than the solution given by the pointwise
constraint (\ref{Flo:Ex3_1_a}) is. Therefore, despite a larger value
of $\rho$, the full monotonicity constraint is in fact more robust.

\begin{figure}
\centering{}\subfloat[pointwise monotonicity constraint]{\includegraphics[width=0.4\textwidth]{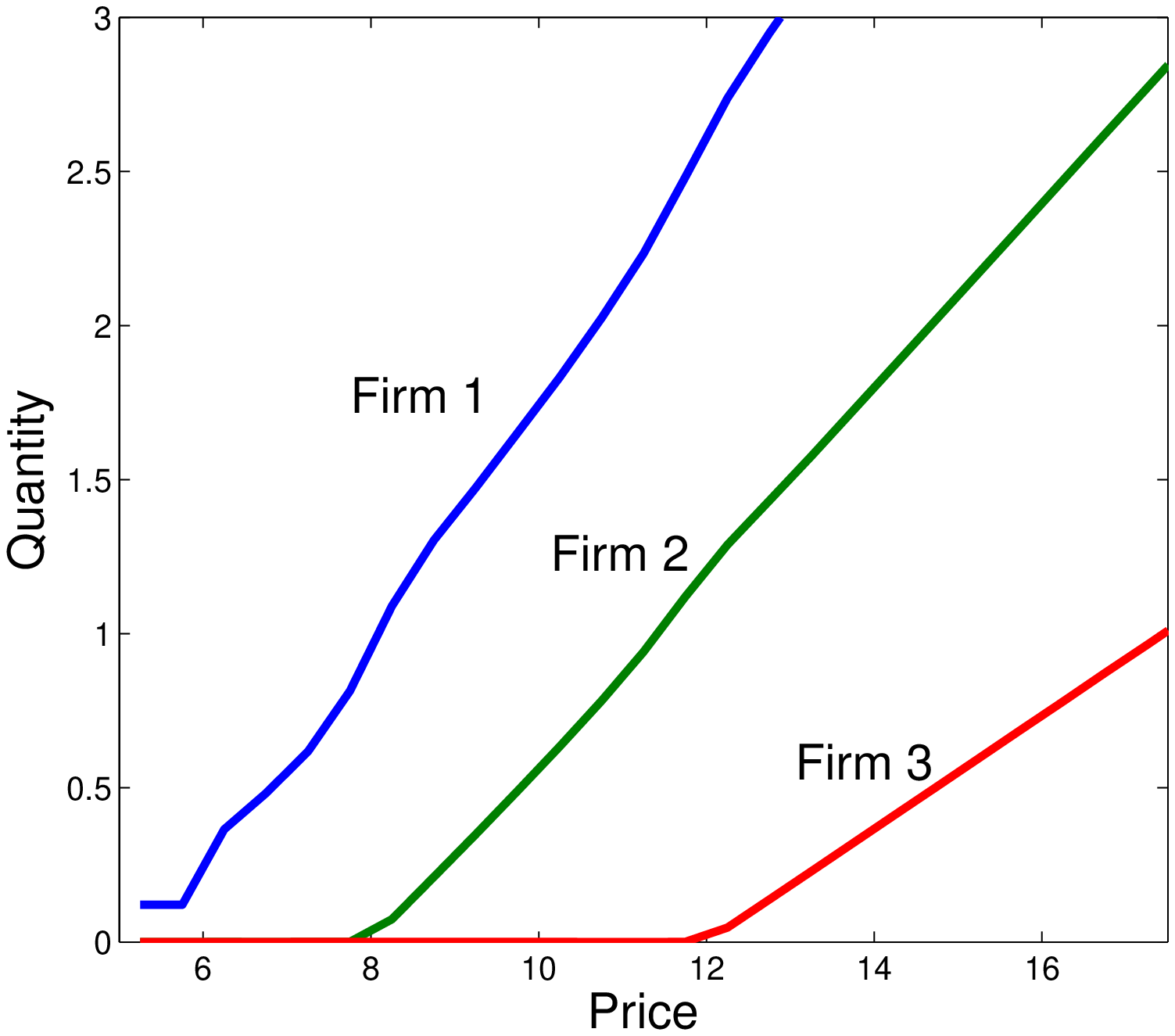}

\label{Flo:Ex3_1_a}}\subfloat[full monotonicity constraint]{\includegraphics[width=0.4\textwidth]{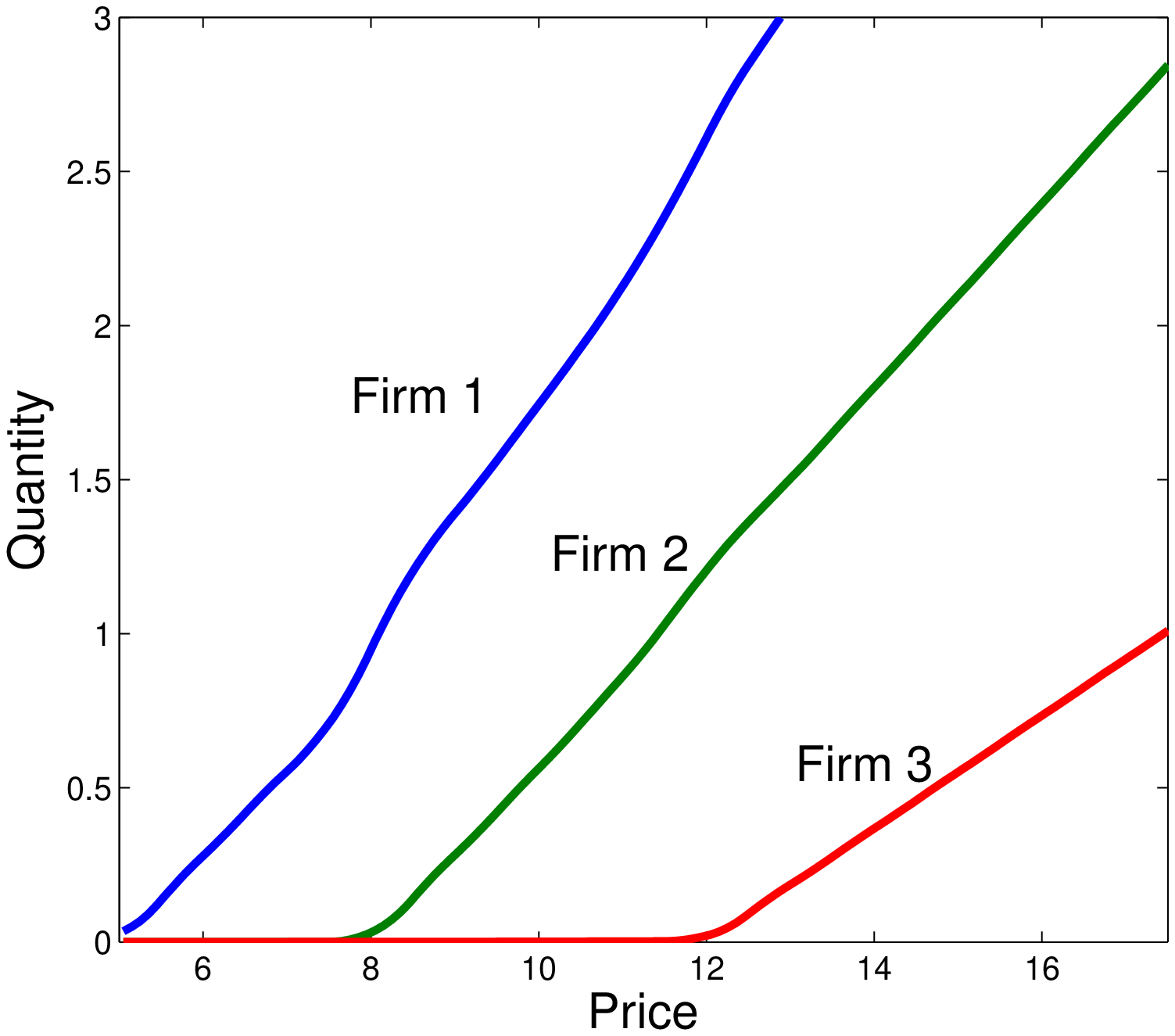}

\label{Flo:Ex_3_1_b}}\caption{Comparison of monotonicity constraints}
\label{Flo:ex3_1}
\end{figure}

We also see from Figure \ref{Flo:ex3_1} that although the full monotonicity
constraint gives a better approximation, it is still not close enough
to the true equilibrium. This is due to the fineness of the mesh,
and as we make the mesh finer, the result will be better. As an illustration,
we reduce the knot interval from 0.5 to 0.1, and again put one price
level at the center of each new knot interval. Keep the full monotonicity
constraint and solve the problem, CONOPT gives a new result with $\rho=0.00017$.
Figure \ref{Flo:ex3_2} compares the result of the finer approximation
(\ref{Flo:Ex3_2_b}) with the previous coarser approximation (\ref{Flo:Ex3_2_a}).
It is apparent that the precision has significantly improved. Figure
\ref{Flo:ex3_3} is the full plot of the finer approximation for $5\le p\le54$.

\begin{figure}
\begin{centering}
\subfloat[$\left|\tau\right|=0.5$]{\includegraphics[width=0.4\textwidth]{figure_ex3_2}

\label{Flo:Ex3_2_a}}\subfloat[$\left|\tau\right|=0.1$]{\includegraphics[width=0.4\textwidth]{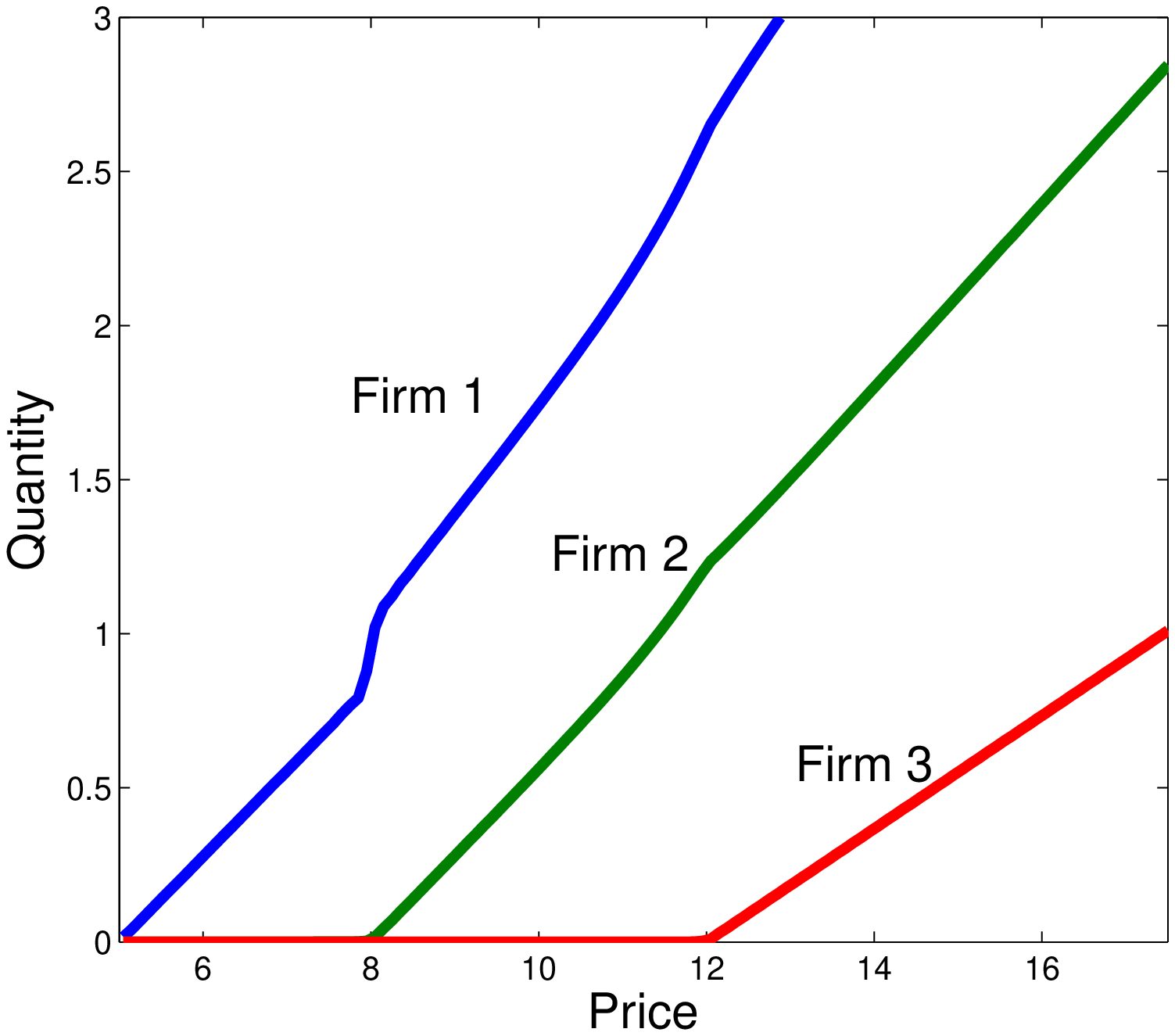}

\label{Flo:Ex3_2_b}}
\par\end{centering}

\caption{Comparison of mesh sizes}
\label{Flo:ex3_2}
\end{figure}

\begin{figure}
\begin{centering}
\includegraphics[width=0.6\textwidth]{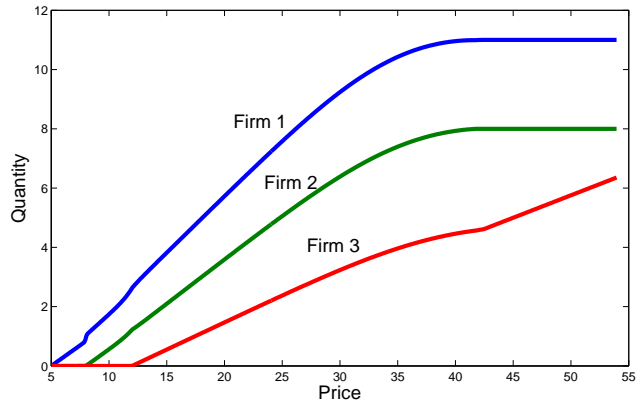}
\par\end{centering}

\caption{Approximation with $\left|\tau\right|=0.1$}
\label{Flo:ex3_3}
\end{figure}

For an even better approximation, one can always make the knot intervals
smaller. However, as the number of knots increases, the problem will
eventually become too large to solve. One way to improve the precision
while keeping the problem size tractable is to use the information
we obtained from a coarser approximation. From Figure \ref{Flo:ex3_3}
we can see that for $15\le p\le40$, the supply functions are very
smooth with little fluctuation, which means that a few pieces of quadratic
polynomials are good enough to approximate them. Therefore we can,
for example, set one knot at every 0.05 unit for $5\le p\le15$, and
at every 5 units for $15\le p\le40$, which allows us to save hundreds
of knots that would incur thousands of constraints.
\end{example}

\section{Conclusions}

One of the reasons why finding SFE has been so difficult is that the
supply functions do not have specific forms, so all the non-decreasing
functions (bounded by capacity) have to be considered. To find these
free-form functions, parameterization is almost inevitable, and splines,
due to their flexibility, are arguably the best way of parameterization
for the purpose of approximation. For duopolies with constant marginal
costs, we found that the first order conditions are linear in the
spline coefficients, allowing us to approximate the solutions of the
ODE system by solving a least squares problem. We proved that when
the demand can be sufficiently low with a positive probability, the
solution space of the least squares problem is exactly the solution
space of the ODE system. And since least squares problems are so easy
to solve, we can obtain solutions of high precision by using very
fine mesh, while still solving it fast. The solutions have a clean
form, which allows us to find the equilibrium easily by searching
for the supply functions that reach the capacities smoothly.

We also used splines to improve the general purpose method given in
Anderson and Hu \cite{anderson2008finding}. Both their original method
and our proposal should be equally accurate, but the use of splines
enabled us to significantly reduce the number of decision variables
and constraints used in the auxiliary NLP, thus making the problem
easier to solve, and without the need of human intervention.

The solutions of both the specialized and the general purpose methods
are proved to converge uniformly to the SFE. We also provided numerical
examples to demonstrate the use of these methods, and the solutions
are precise and reflect the theoretical properties of SFEs that we
developed throughout the paper.

\bibliographystyle{plunsrt}
\nocite{*}
\bibliography{SFE-spline}

\end{document}